\definecolor{myred}{rgb}{0.75,0,0}
\definecolor{mygreen}{rgb}{0,0.5,0}
\definecolor{myblue}{rgb}{0,0,0.65}
\title[Intersection Cohomology Complexes]{Modular Intersection cohomology complexes on Flag Varieties}
\author{Geordie Williamson
\\ with an appendix by Tom Braden}
\address{ Geordie Williamson \\
Mathematical Institute,
  University of Oxford, 24-29 St Giles', Oxford, OX1 3LB, UK}
\email{geordie.williamson@maths.ox.ac.uk}
\urladdr{http://people.math.ox.ac.uk/williamsong}
\address{Tom Braden \\
Dept.\ of Mathematics and Statistics\\
         University of Massachusetts, Amherst}
\email{braden@math.umass.edu}
  \newcommand{\nc}{\newcommand}
  \newcommand{\renc}{\renewcommand}
\nc{\kos}[2]{\EuScript{K}_{#1,#2}}
\nc{\bet}{b}
\nc{\vp}{\varphi}
\nc{\betT}{b_T}
\nc{\QT}{Q}
\nc{\HT}{S}
\nc{\ep}{\epsilon}
\nc{\Bi}{\mathbf{i}}
\nc{\BB}{B\times B}
\nc{\C}{\mathbb{C}}
\nc{\R}{\mathbb{R}}
\nc{\Cs}[1]{\underline{\C}_{#1}}
\nc{\IC}{\mathbf{IC}}
\nc{\sGw}{G_w}
\nc{\up}{\vp^G_H}
\nc{\Kw}{K_w}
\nc{\B}{\mathcal{B}}
\nc{\SU}[1]{\mathrm{SU}(#1)}
\nc{\SL}[1]{\mathrm{SL}(#1)}
\nc{\HU}[1]{\mathbb{H}\mathrm{U}(#1)}
\DeclareMathOperator{\rk}{rk}
\renc{\t}{\mathfrak t}
\nc{\td}{\t^*}
\nc{\g}{\mathfrak g}
\nc{\HG}{H_G}
\renc{\k}{\mathbf{k}}
\renc{\S}[1]{R_{#1}}
\nc{\Si}{\S i}
\nc{\hc}{\mathbb{H}^*}
\nc{\mc}{\mathcal}
\nc{\Hom}{\mathrm{Hom}}
\nc{\ti}{\tilde}
\renc{\O}{\mathbb {O}}
\nc{\hcBB}{\hc_{B\times B}}
\nc{\si}{\sigma}
\nc{\al}{\alpha}
\nc{\HH}{\mathbb H}
\nc{\Tor}{\mathrm{Tor}}
\nc{\KR}{\mc{KR}}
\nc{\Supp}{\mathrm{Supp}}
\nc{\ASu}{\mathrm{Supp}'}
\nc{\tri}{\tau}
\nc{\ext}{\mathrm{ext}}
\nc{\K}{\mathbb{K}}
\nc{\A}{\mathcal{A}}
\nc{\AH}{\A_H}
\nc{\AG}{\A_G}
\nc{\Lotimes}{\stackrel{L}{\otimes}}
\nc{\be}{\beta}
\nc{\ga}{\gamma}
\nc{\BS}[1]{BS_{#1}}
\nc{\BSi}{\BS\Bi}
\nc{\excise}[1]{}
\nc{\ver}{\vartheta}
\nc{\Z}{\mathbb{Z}}
\nc{\N}{\mathbb{N}}
\nc{\sE}{\mathcal{E}}
\nc{\sF}{\mathcal{F}}
\nc{\sG}{\mathcal{G}}
\nc{\F}{\mathbb{F}}
\nc{\Q}{\mathbb{Q}}
\nc{\He}{\mathcal{H}}
\nc{\h}{\underline{H}}
\nc{\bo}{\underline{B}}
\nc{\dL}{\mathcal{L}}
\nc{\dR}{\mathcal{R}}
\nc{\D}{\mathbb{D}} % Verdier duality
\DeclareMathOperator{\sFor}{For} % Forgetful functor
\DeclareMathOperator{\Char}{char}
\DeclareMathOperator{\ch}{ch}
\DeclareMathOperator{\End}{End}
\DeclareMathOperator{\inc}{inc}
\DeclareMathOperator{\rad}{rad}
\newcommand{\kk}{{\Bbbk}}
\newcommand{\PP}{{\mathbb P}}
\newcommand{\cN}{\mathcal{N}}
\newcommand{\cL}{\mathcal{L}}
\newcommand{\cT}{\mathcal{T}}
\newcommand{\Sig}{{\Sigma}}
\newcommand{\vep}{{\varepsilon}}
\newcommand{\udot}{{\scriptscriptstyle \bullet}}
\def\coker{\mathop{\rm coker}\nolimits}
\def\rank{\mathop{\rm rank}\nolimits}
\newcommand{\Xt}{\Sig}
\newcommand{\supp}{\mathop{\rm supp}}
\newcommand{\bw}{\mathbf{w}}
\newcommand{\wti}{\tilde{w}}
\newcommand{\sti}{\tilde{s}}
\newcommand{\defeq}{\stackrel{\rm def}{=}}
\nc{\Ad}{\mathop{\mathrm{Ad}}}
\newcommand{\bal}{{\boldsymbol{\alpha}}} 
  \newtheorem{defi}{Definition}
  \newtheorem{thm}{Theorem}[section]
  \newtheorem{lem}[thm]{Lemma}
  \newtheorem{prop}[thm]{Proposition}
  \newtheorem{cor}[thm]{Corollary}
  \newtheorem{ex}[thm]{Example}
  \theoremstyle{remark}
  \newtheorem{remark}[thm]{Remark}
\begin{document}

\begin{abstract}
  We present a combinatorial procedure (based on the $W$-graph of the
  Coxeter group) which shows that the characters of many intersection
  cohomology complexes on low rank complex flag varieties with
  coefficients in an arbitrary field are given by
  Kazhdan-Lusztig basis elements. Our procedure exploits the existence
  and uniqueness of parity sheaves. In particular we are able to show
  that the characters of
  all intersection cohomology complexes with coefficients in a field
  on the flag variety of type $A_n$ for $n < 7$ are given by
  Kazhdan-Lusztig basis elements. By results of Soergel, this implies
  a part of Lusztig's conjecture for $SL(n)$ with $n \le 7$. We also
  give examples where our techniques fail.

In the appendix by Tom Braden examples are given of intersection
cohomology complexes on the flag varities for $SL(8)$ and $SO(8)$
which have torsion in their stalks or costalks.
\end{abstract}
\maketitle

\section{Introduction}
\label{sec:introduction}

Let $\kk$ be a field of characteristic $p \ge 0$. Let $G$ be a connected
reductive algebraic group over $\C$, $B \subset G$ denote a
Borel subgroup of $G$ and let $(W, S)$ be the corresponding
Weyl group and its simple reflections. 
Consider the flag variety $G/B$ with its classical (metric) topology
and let $D^b_{\Lambda}(G/B)$ denote the bounded derived category of
sheaves of $\kk$-vector spaces on $G/B$ constructible along
$B$-orbits. In $D^b_{\Lambda}(G/B)$ there exist the intersection
cohomology sheaves $\IC(w)$. The sheaf $\IC(w)$ is supported on the
closure of the Bruhat cell $BwB/B$ and its restriction to $BwB/B$ is a
constant sheaf in degree $- \ell(w)$.

Let $\He$ be the Hecke algebra of $(W,S)$ over $\mathbb{Z}[v,v^{-1}]$ normalised so as to satisfy
\begin{eqnarray*}
H_sH_w &= \left \{ \begin{array}{ll}
H_{sw} & \text{if $sw > w$}  \\
(v - v^{-1})H_{w} + H_{sw} & \text{if $sw < w$} \end{array} \right .
\end{eqnarray*}
and let $\{ \h_w \; |\; w \in W \}$ be the Kazhdan-Lusztig basis of $\He$. It satisfies $\h_w \in H_w + \oplus_{x < w} v^{-1}\N[v^{-1}] H_x$. Given a finite dimensional graded vector space $V = \oplus V_i$ let $P(V) = \sum (\dim V_i) v^{i}$ be its Poincar\'e polynomial.

The character of a sheaf $\sF \in D^b_\Lambda(G/B)$ is the element of $\He$ given by
\begin{displaymath}
\ch(\sF) = \sum_{w \in W} P(H^*(\sF_w))v^{\ell(w)}H_w
\end{displaymath}
where $\sF_w$ denotes the stalk of $\sF$ at the point in $G/B$ corresponding to
$w \in W$. If $\kk$ is of characteristic zero, a theorem of Kazhdan and
Lusztig \cite{KL2,SpIH} says that $\ch(\IC(w)) = \h_w$.
 Thus the Poincar\'e
polynomials of the stalks of the intersection cohomology sheaves are
given by Kazhdan-Lusztig polynomials. It then follows that the same is
true in almost all characteristics\footnote{Intersection
  cohomology complexes $\IC(X,\Q)$ admit integral forms $\IC(X,\Z)$ such that the cohomology
  groups of the stalks and costalks are finitely generated (see \cite{decperv}). If the
  cohomology groups of the stalks and costalks are free of $p$-torsion (which will be the case
  for all but finitely many primes $p$), one has $\IC(X, \Z)
  \otimes_{\Z}^L \kk \cong \IC(X,\kk)$ and $\ch(\IC(X, \Q)) = \ch(\IC(X,
 \kk))$. For more details see the end of Section \ref{sec-sheaves}.}, however for any given
characteristic almost nothing is known.

It is a difficult question to determine over which fields one has
$\ch(\IC(w)) = \h_w$ and, if not, what these characters are. It has
been known since the original papers of Kazhdan and Lusztig
(\cite{KL1} and \cite{KL2}) that in non-simply laced cases the
intersection cohomology complexes may have a different character in
characteristic 2. (This happens, for example, in the only non-smooth
Schubert variety in the flag variety of $Sp(4)$.) In 2002 
Braden discovered examples  of Schubert varieties in simply
laced types $A_7$ and $D_4$ where the character of the intersection
cohomology sheaf in characteristic 2 is different to all other
characteristics (see the appendix).

In this article we define combinatorially a certain subset $\sigma(W) \subset W$ of \emph{separated elements} and show:

\begin{thm} \label{thm-sep}
Suppose that $x \in \sigma(W)$, then $\ch(\IC(w)) = \h_w$ for any field $\kk$.
\end{thm}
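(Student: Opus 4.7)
The strategy is to exploit the theory of parity sheaves, as the abstract suggests. For each $w \in W$ there exists a unique indecomposable parity complex $\sE(w) \in D^b_\Lambda(G/B)$ supported on $\overline{BwB/B}$ and normalised so that $\sE(w)|_{BwB/B}$ is the constant sheaf placed in degree $-\ell(w)$. Write ${}^p\h_w := \ch(\sE(w))$; this element of $\He$ lies in $H_w + \sum_{x<w}\Z[v,v^{-1}]H_x$, and in characteristic zero coincides with $\h_w$, since $\sE(w) = \IC(w)$ there. The theorem will follow once we show that for $w \in \sigma(W)$ the sheaf $\IC(w)$ is itself parity, hence isomorphic to $\sE(w)$, and that ${}^p\h_w = \h_w$.

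I would proceed by induction on the Bruhat order. Fix a reduced expression $\Bi = (s_1,\ldots,s_n)$ for $w$. The Bott--Samelson resolution $\BSi \to \overline{BwB/B}$ is smooth and proper, so the pushforward of its constant sheaf is a parity complex whose character equals the product $\h_{s_1}\cdots\h_{s_n}$ in $\He$. By Krull--Schmidt for parity complexes this decomposes as $\sE(w) \oplus \bigoplus_{x<w}\sE(x)^{\oplus m_x(v)}$ for some $m_x(v) \in \Z_{\ge 0}[v,v^{-1}]$. Assuming inductively that ${}^p\h_x = \h_x$ for every separated $x<w$, and comparing with the characteristic-zero decomposition of the same product (which reproduces $\h_w$), one reads off ${}^p\h_w$ in terms of the $m_x(v)$ and the $\h_x$. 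The combinatorial condition defining $\sigma(W)$ must be tailored through the $W$-graph so that every $x$ with non-zero $m_x(v)$ is either itself separated, or is coupled to $w$ only via $\mu$-coefficients which force its contribution to the coefficient of $H_w$ in $\h_w$ to agree with the classical answer. Once ${}^p\h_w = \h_w$ is known, standard parity--perversity arguments force $\sE(w)$ to lie in the perverse heart and to equal $\IC(w)$.

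The main obstacle will be to verify that the definition of $\sigma(W)$ blocks every mechanism through which a characteristic-dependent correction could enter. Even if all $x < w$ appearing as summands in the Bott--Samelson decomposition were themselves parity, one still must know that their multiplicities $m_x(v)$ agree with the characteristic-zero values computed via Kazhdan--Lusztig $\mu$-coefficients; and when some $x < w$ is not separated, its ${}^p\h_x$ may differ from $\h_x$ and, a priori, feed back into the character of $\sE(w)$. The critical combinatorial work is thus to formulate the ``separation'' property at the level of the $W$-graph stringently enough to preclude both failure modes, while keeping it checkable in practice and broad enough to cover all of $W$ for, say, type $A_n$ with $n<7$.
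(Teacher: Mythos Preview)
Your use of parity sheaves and induction on Bruhat order is correct, and the reduction to showing $\ch(\sE(w)) = \h_w$ (after which Proposition~\ref{prop:IC} gives $\sE(w)\cong\IC_B(w)$) is exactly right. However, the mechanism you propose---decomposing the full Bott--Samelson pushforward and comparing the multiplicities $m_x(v)$ with their characteristic-zero values---is not the paper's argument, and as stated it does not go through. Knowing that the $m_x(v)$ agree with the $\mu$-coefficient predictions is essentially equivalent to the decomposition theorem holding, which is precisely what is in question; you have correctly identified this as ``the main obstacle'', but you have not offered a way around it.

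The paper avoids multiplicities entirely. Instead of a single long Bott--Samelson word, it uses a \emph{single} push--pull $\ver_s\sE(sx)$ for each left descent $s\in\dL(x)$ (and $\sE(xt)\ver_t$ for each right descent $t$). The point is that $\sE(x)$ occurs as a summand in \emph{every one} of these, so its character has KL-support contained in $\suppKL\big(\h_s\ch(\sE(sx))\big)$ for every such $s$, and similarly on the right. One then \emph{intersects} these support constraints. The inductively defined set $f_W(x)$ records exactly this intersection (under the hypothesis that $\ch(\sE(sx))$ is already KL-supported in degree~$0$, which is guaranteed by the condition ${}^sf_W(sx)=f_W(sx)$), and ``separated'' means the intersection collapses to $\{x\}$. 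No control over individual multiplicities is ever required---only that a summand of a degree-$0$ KL-supported complex is again degree-$0$ KL-supported with smaller support (Lemma~\ref{support-lemma}). Your proposal misses this intersection-over-descents idea, which is the actual content of the proof.
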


The determination of the characters of $\IC(w)$ is closely related to
the decomposition theorem (see \cite[6.2.5]{BBD} or
\cite[2.1.1]{dCM2}). Given a simple reflection $s \in S$ let $P_s$ be
the corresponding standard minimal
parabolic subgroup and consider the quotient map
\begin{displaymath}
G/B \stackrel{\pi_s}{\to} G/P_s.
\end{displaymath}
If $\kk$ is of characteristic zero, the decomposition theorem implies
that ${\pi_s}_* \IC(w)$ is a direct sum of shifts of intersection
cohomology sheaves. This need not be true if $\kk$ is of positive
characteristic. Given $w \in W$ and $s \in S$ let $\{ w_1, \dots, w_n
\}$ be the parameters of Kazhdan-Lusztig basis elements that
occur with non-zero coefficient when the product $\h_w\h_s$ is
expressed in the Kazhdan-Lusztig basis. Then we have:

\begin{thm} \label{thm-dec} Suppose that $w$ and $w_1, \dots, w_n$ lie
  in $\sigma(W)$. Then the decomposition theorem holds for
  ${\pi_s}_*\IC(w)$; that is, ${\pi_s}_*\IC(w)$ is isomorphic to a
  direct sum of shifts of intersection cohomology complexes.
\end{thm}

Whilst being of considerable intrinsic interest, these questions are
also important in representation theory. Assume that $\kk$ is
algebraically closed and that the characteristic of $\kk$ is
strictly greater than the Coxeter number of $W$. Now let $G^{\vee}$ be
a semi-simple
and simply connected algebraic group over $\kk$ with maximal torus
$T^\vee \subset G^\vee$ and root system dual to that of $G$ (for a
choice of maximal torus $T \subset B$). Choose a Borel subgroup
$B^\vee \supset T^\vee$ and define positive roots $R^+
\subset X(T^{\vee})$ so that the roots corresponding to $B^\vee$ are those
lying in $-R^+$. To each weight $\lambda \in X(T^{\vee})$ one may
associate a  module $H^0(\lambda)$ which is non-zero if and
only if $\lambda$ is dominant, in which case it contains a unique
simple submodule $L(\lambda)$.

A conjecture of Lusztig \cite{LusPb} expresses the characters of the
simple  $G^{\vee}$-modules $L(\lambda)$ in terms of the (known) characters of
the modules $H^0(\lambda)$. A particular case of the conjecture is the
following (see \cite{Soe}): let $\rho \in X(T^{\vee})$ denote the
half-sum of the
positive roots, and let $st = (p-1)\rho$ the Steinberg weight, then it is
conjectured that, 
\begin{equation} \label{eq:lusztig}
[H^0(st + x\rho): L(st + y\rho)] = h_{x,y}(1) \quad \text{for all $x, y \in W$,}
\end{equation}
where $h_{x,y} \in v^{-1}\N[v^{-1}]$ is the Kazhdan-Lusztig polynomial indexed by $x, y \in W$. A theorem of Soergel \cite{Soe} says that (\ref{eq:lusztig}) is equivalent to the semi-simplicity of ${\pi_s}_*\IC(x)$ for all $x \in W$ and $s \in S$.

Of course, in order to apply Theorems \ref{thm-sep} and \ref{thm-dec}
it is necessary to know the set $\sigma(W)$. The essential ingredient
in the calculation of $\sigma(W)$ is the $W$-graph of the Coxeter
system $(W,S)$. Unfortunately, even in simple situations the $W$-graph
can be very complicated and no general description is known. However,
using Fokko du Cloux's program \emph{Coxeter} \cite{dCl1} it is
possible to use a computer to determine the set $\sigma(W)$ for low
rank Weyl groups. The simplest situation is when $\sigma(W) = W$.
This only occurs in type A in low rank: 

\begin{thm} \label{thm:a6}
Let $G$ be of type $A_n$ for $n \le 6$. Then $\sigma(W) = W$. Hence,
in all characteristics the intersection cohomology complexes have
characters given by Kazhdan-Lusztig basis elements and the
decomposition theorem holds for ${\pi_s}_* \IC(x)$ for all $s \in S$
and $x \in W$.
\end{thm}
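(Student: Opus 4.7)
The plan is to reduce Theorem \ref{thm:a6} to a finite computation establishing the single assertion $\sigma(W) = W$ in type $A_n$ for $n \le 6$. Once this is known, both of the remaining conclusions follow automatically: Theorem \ref{thm-sep} immediately yields $\ch(\IC(w)) = \h_w$ for every $w \in W$ and every field $k$, and Theorem \ref{thm-dec} then applies to the push-forward ${\pi_s}_* \IC(x)$ for every $s \in S$ and $x \in W$, since the hypothesis that $x$ and every Kazhdan-Lusztig parameter $w_i$ appearing in $\h_x \h_s$ lies in $\sigma(W)$ is vacuously satisfied. Thus the entire content of the theorem is concentrated in verifying $\sigma(W) = W$ combinatorially.

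To carry out that verification we would proceed as follows. Since $\sigma(W)$ is defined purely in terms of the $W$-graph of $(W,S)$, the input data we need are (a) the Kazhdan-Lusztig polynomials $h_{x,y}$ for all $x,y \in W$ and (b) the resulting edges and $\mu$-coefficients of the $W$-graph. Both are produced by Fokko du Cloux's program \emph{Coxeter} \cite{dCl1}. For $n \le 6$ we have $|W| \le 7! = 5040$, so the computation is of modest size. The separation criterion is then checked element by element; because the definition of $\sigma(W)$ is expected to be given by an inductive closure procedure (separated elements built from simpler separated elements together with their $W$-graph neighborhoods), the implementation must enumerate elements in a compatible order, e.g.\ by Bruhat length, propagating membership in $\sigma(W)$ as it proceeds.

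The main obstacle is not conceptual but computational and organizational. One has to be confident that the implementation of the separation condition faithfully reproduces the combinatorial definition, and that the $W$-graph data produced by \emph{Coxeter} is correctly translated into the input required by the check. A second point worth emphasizing is the sharpness of the result: the theorem is false for $n = 7$, as Braden's examples in the appendix exhibit Schubert varieties in type $A_7$ whose $\IC$ stalks change in characteristic $2$. These examples are precisely what prevents $\sigma(W) = W$ from holding for $n \ge 7$, so the computation is not merely establishing a bound but identifying exactly the regime in which our combinatorial criterion still sees every element of $W$.
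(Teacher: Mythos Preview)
Your proposal is correct and matches the paper's own approach: the proof of Theorem~\ref{thm:a6} in the paper consists precisely of the computer verification (via du Cloux's \emph{Coxeter} for the $W$-graph, followed by the inductive computation of $f_W$ and $\sigma(W)$) that $\sigma(W)=W$ in type $A_n$ for $n\le 6$, after which Theorems~\ref{thm-sep} and~\ref{thm-dec} give the remaining conclusions exactly as you describe.
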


It also follows that (\ref{eq:lusztig}) holds for $G^{\vee} = SL_n(k)$
if $n \le 7$ and $\kk$ has characteristic $> n+1$.

%Using results
%of Braden in the appendix, we are also able to prove
%\eqref{eq:lusztig} if $G^{\vee}$ of type $D_4$ and $p > 6$.

In other types and type $A_n$ for $n \ge 7$ our techniques are not as
effective. In most examples that we have computed $\sigma(W)$ is not
the entire Weyl group. However, we are able to show that the
characters are given by Kazhdan-Lusztig basis elements and verify 
an analogue of the decomposition theorem for many $w \in W$
(that is those $w \in \sigma(W)$)
in ranks $\le 6$. It also seems that the elements $x \notin \sigma(W)$
(for which our methods fail) will provide an interesting source of
future research.

Indeed in the appendix Braden shows that, both in type $D_4$ and
$A_7$, the intersection cohomology sheaf over the integers corresponding to a 
 minimal element in $W \setminus \sigma(W)$ has
2-torsion in the cohomology groups of its stalks or costalks (and hence $\ch(\IC(w)) \ne \h_w$ if the
coefficients are taken to be of characteristic 2). These two
examples, together with the case of dihedral groups, leads one to
suspect a close relationship between $W \setminus \sigma(W)$, and
those intersection cohomology complexes which have 
torsion in the cohomology groups of their stalks or costalks over $\Z$. It would be interesting to
have more examples in this direction.

Let us briefly mention that, in \cite{BrM} Braden and MacPherson give an
algorithm for the calculation of the stalks of the intersection
cohomology complexes with coefficients in $\mathbb{Q}$, using only data that can be obtained from the
fixed points and one-dimensional orbits of a maximal torus acting on
the flag variety. (This data is encoded
in the so-called ``moment graph'' of the flag variety.) 
The recent paper \cite{FW} of Fiebig
and the author extends this result, showing that the moment graph of the flag variety can be
used to calculate the characters of parity sheaves (a certain class of sheaves
characterised by the vanishing of stalks and costalks in degrees of a
fixed parity). It follows that this algorithm can be used to 
determine those intersection cohomology complexes which have torsion
in the cohomology groups of their stalks or costalks over the integers (see Corollary
\ref{cor:Ztorsion}). Thus the results of this paper
could (at least in principle) be deduced from the moment graph. 
In fact, the computations of torsion
in the appendix translate easily into the moment
graph language and give a proof that the moment
graph sheaves obtained via direct image from certain Bott-Samelson
resolutions do not split as much as expected
unless $2$ is invertible in the coefficient ring.

The structure of the paper is as follows. In Section \ref{sec-hecke}
we review the Hecke algebra and Kazhdan-Lusztig basis in more detail
and recall the $W$-graph associated to $(W,S)$. In Section
\ref{sec-sheaves} we discuss \emph{parity sheaves}, which are our main
theoretical tool. In Section
\ref{sec-sep} we define the subset $\sigma(W) \subset W$ and prove
Theorems \ref{thm-sep} and \ref{thm-dec}. In Section \ref{sec-comp} we
discuss the calculation of the sets $\sigma(W)$ via computer and give
some examples of the sets $\sigma(W)$ for low rank Weyl groups.

\subsection{Acknowledgments} I would like to thank Tom Braden for
useful correspondence, pointing out errors in previous versions, and
contributing the appendix. Both G.W. and T.B. would like to extend
their gratitude to Olaf Schn\"urer for very detailed feedback on a
previous version of this manuscript, which lead to the rewriting of
Section \ref{sec-sep}, as well as smaller improvements on almost every
page! We would also like to thank Simon Riche and Patrick Polo for pointing out some typos.

\section{The Hecke algebra and $W$-graphs} \label{sec-hecke}

In this section we recall the Hecke algebra and Kazhdan-Lusztig basis
in slightly more detail. Up to some small changes of notation we
follow \cite{LuBook}. Let $(W,S)$ be a Coxeter system with Bruhat
order $\le$ and length function $\ell: W \to \mathbb{N}$. Given $w \in
W$ we define the \emph{left} and \emph{right descent set} to be
\begin{displaymath}
\dL(w) = \{ s \in S \; | \; sw < w \} \; \text{  and  } \;
\dR(w) = \{ s \in S \; | \; ws < w \}.
\end{displaymath}
Recall that the Hecke algebra is the free $\Z[v,v^{-1}]$-module with multiplication given by
\begin{eqnarray*}
H_sH_w &= \left \{ \begin{array}{ll}
H_{sw} & \text{if $s \notin \dL(w)$,}  \\
(v - v^{-1})H_{w} + H_{sw} & \text{if $s \in \dL(w)$.} \end{array} \right .
\end{eqnarray*}
The elements $H_w$ are invertible and there is an involution $h
\mapsto \overline{h}$ on $\He$ which sends $H_w$ to $H_{w^-1}^{-1}$ and $v$ to $v^{-1}$. We will call elements fixed by this involution \emph{self-dual}.

There exists a basis $\{ \h_w \}$ of $\He$ called the \emph{Kazhdan-Lusztig basis} which is uniquely determined by requiring:
\begin{enumerate}
\item the $\h_w$ are self-dual;
\item $\h_w = \sum_{x \le w}h_{x,w}H_x$ where $h_{w,w} = 1$ and $h_{x,w} \in v^{-1}\Z[v^{-1}]$ for $x \ne w$.
\end{enumerate}
The polynomials $h_{x,w}$ are (up to a renormalisation) the \emph{Kazhdan-Lusztig polynomials}. One may check, for example, that $\h_s = H_s + v^{-1}H_{id}$.

The action of $\h_s$ for $s \in S$ on the Kazhdan-Lusztig basis has a
particularly simple form. We denote by $\mu(x,w)$ the coefficient of
$v^{-1}$ in $h_{x,w}$. Then (see \cite[Theorem 6.6 and Corollary 6.7]{LuBook}):
\begin{eqnarray} \label{KL-mult-left}
\h_s\h_w &= \left \{ \begin{array}{ll}
(v + v^{-1})\h_{w} & \text{if $s \in \dL(w)$,}  \\
\h_{sw} + \sum_{x<w; s \in \dL(x)} \mu(x,w)\h_x & \text{if $s \notin \dL(w)$.} \end{array} \right .
\end{eqnarray}
Simlarly, on the right we have:
\begin{eqnarray} \label{KL-mult-right}
\h_w\h_s &= \left \{ \begin{array}{ll}
(v + v^{-1})\h_{w} & \text{if $s \in \dR(w)$,}  \\
\h_{ws} + \sum_{x<w; s \in \dR(x)} \mu(x,w)\h_x & \text{if $s \notin \dR(w)$.} \end{array} \right .
\end{eqnarray}
It is known if $W$ is a Weyl group (the case of
interest below) then $\mu(x,w) \in \mathbb{N}$ for all $x, w \in W$.

Thus all the information about the action of $\h_s$ on the left and
right on the Kazhdan-Lusztig basis may be encoded in a labelled graph,
known as the \emph{$W$-graph}. The vertices correspond to the elements
of $W$ and are labelled with the left and right descent sets. There is
a directed edge between $x$ and $y \in W$ if $\mu(x,y) \ne 0$, in
which case the edge is labelled with the value of
$\mu(x,y)$. (Strictly speaking, the graph that we define is a
  variant of what Kazhdan and Lusztig call a $W \times
  W^{\textrm{o}}$-graph.) For more
details on the Kazhdan-Lusztig basis and $W$-graphs the reader is
referred to \cite{KL1}, \cite{HUM}, \cite{LuBook} or \cite{SoeKL}.

\section{Parity sheaves} \label{sec-sheaves}

\nc{\triright}{\stackrel{[1]}{\to}}
\nc{\uk}{\underline{k}}
\nc{\sur}{\twoheadrightarrow}

In this section we recall some basic properties of ``parity sheaves'' introduced in \cite{JMW2} and motivated by \cite{Soe}. These are our main technical tool.

We recall briefly the setting of \cite{JMW2}. Throughout, $\kk$ denotes a field or
complete local principal ideal domain.\footnote{The case where $\kk$ is a
  complete local PID will only be necessary at the end of this
  section to prove Theorem \ref{thm:Ncomb} and discuss the
  relationship between the characters of parity sheaves and the
  existence of torsion in the cohomology groups of the stalks or costalks of intersection
  cohomology complexes. If one is willing to
  accept these results one may assume that $\kk$ is field throughout.}
All spaces will be complex algebraic $H$-varieties,
for $H$ a complex linear algebraic group.
Given an $H$-space $X$, we write $D^b_c(X,\kk)$ or $D^b_c(X)$ for the bounded derived
category of constructible $\kk$-sheaves on $X$ and $D^b_H(X,\kk)$ or $D^b_H(X)$ for the
bounded $H$-equivariant derived category of constructible
sheaves of $\kk$-modules on $X$ (see \cite{BLu}).
Given $\sF$ in $D^b_c(X)$  or $D^b_{H}(X)$ we denote by
$\mathcal{H}^j(\sF)$ the $j^{th}$ cohomology sheaf of $\sF$ (which is
a sheaf or equivariant sheaf of $\kk$-modules).
By abuse of language, we
call objects in $D^b_{H}(X)$ sheaves. 
We denote by $\sFor : D^b_{H}(X) \to
D^b_{c}(X)$ the forgetful functor (see \cite{BLu}).
If $H$ has finitely many orbits on $X$ then the image of the forgetful
functor is contained in $D^b_{\Lambda}(X)$, the full subcategory of
$D^b_c(X)$ consisting of sheaves whose cohomology sheaves are locally constant
along $H$-orbits. The category  $D^b_H(X)$ is Krull-Remak-Schmidt:
an object is indecomposable if and only if its endomorphism ring is local; any object
admits a decomposition into indecomposable objects; and the
multiplicity of an indecomposable object as a summand of any object is
independent of the chosen decomposition.

All maps will be equivariant morphisms of complex algebraic
varieties.
Given a map $f : X \to Y$ we have functors $f_*$, $f_!$
from $D^b_{H}(X)$ to $D^b_{H}(Y)$ and $f^*$, $f^!$
from $D^b_{H}(Y)$ to
$D^b_{H}(X)$. Similar functors exist between $D^b_c(X)$ and $D^b_c(Y)$.
On the categories
$D^b_H(X), D^b_H(Y), D^b_c(X)$ and $D^b_c(Y)$ we have the Verdier duality functor, which
we denote by $\mathbb{D}$. We have isomorphisms of
functors $\mathbb{D} f_* \cong f_! \mathbb{D}$ and $\mathbb{D} f^*
\cong f^! \mathbb{D}$. All functors $f^*, f^!, f_!, f_*, \mathbb{D}$ commute with the forgetful functor.

%Given an $H$-space $X$ we denote by $\underline{\kk}_X \in D^b_H(X)$ the
%equivariant constant sheaf on $X$.

Now let $G$ denote a connected reductive complex algebraic group and $B \supset T$ a
 Borel subgroup and maximal torus. Let $W$ denote the Weyl group and
 $S \subset W$ the set of simple reflections corresponding to $B$. Throughout $X = G/P$,
 where $P$ is either $B$ or a minimal
 standard parabolic subgroup $P_s$ corresponding to $s \in S$
 (i.e. $P_s := \overline{BsB}$).
We regard $X$ as a $B$-variety. Each
$B$-orbit is isomorphic to an affine space and the strata are
classified by $W$ if $P = B$ and $W/\langle s \rangle$ if $P = P_s$.
Given $w \in W$ (resp. $\overline{w} \in W/\langle s \rangle$)
we denote by $X_w$ (resp. $X_{\overline{w}}$) the stratum
$BwB/B$ (resp. $BwP_s/P_s$), by $i_w : X_w \hookrightarrow G/B$
(resp. $i_{\overline w} : X_{\overline w} \hookrightarrow G/P_s$) its
inclusion and by $\underline{\kk}_w$ (resp. $\underline{\kk}_{\overline w}$) the
$B$-equivariant constant sheaf on $X_w$ (resp. $X_{\overline w}$) with
fibre $\kk$.

%We will abuse notation and regard elements of $W$ as points in $X$.

%We denote by $\IC(X_w)$ the intersection cohomology sheaf of $\overline{BwP/P}$ extended by zero to $X$.

For brevity, $? \in \{ *, ! \}$ and $X = G/B$. A sheaf $\sF \in D^b_{B}(X)$ is
\emph{$?$-even} if $i_{w}^? \sF$ is isomorphic to 
a direct sum of even shifts of constant sheaves $\underline{\kk}_w$, for all strata $X_w \subset X$. A sheaf
is \emph{even} if it is both $*$- and $!$-even. A
sheaf $\sF$ is \emph{($?$-) odd} if $\sF[1]$ is ($?$-) even. A sheaf
$\sF \in D^b_{B}(X)$ is (?-)\emph{parity} if we have an isomorphism $\sF
\cong \sF_0 \oplus \sF_1$ with $\sF_0$ (?-)even and $\sF_1$
(?-)odd. Note that direct sums and summands of (?-)parity sheaves are
(?-)parity. Entirely analagous definitions apply when $X = G/P_s$.

The following theorem shows that one may classify indecomposable parity sheaves on the flag variety in a similar way to intersection cohomology sheaves:

\begin{thm}[{\cite[2.9]{JMW2}}] \label{thm-clas}  For any $w \in W$
  (resp. $w \in W/\langle s \rangle$) 
there exists (up to isomorphism) a unique indecomposable parity sheaf
$\sE(w, k) \in D^b_{B}(G/B)$ (resp. $\sE(\overline{w}, k) \in
D^b_{B}(G/P_s)$) with support contained in $\overline{X_w}$
(resp. $\overline{X_{\overline{w}}}$) 
and $i_w^* \sE(w) \cong \underline{\kk}_w[ \dim X_w]$ (resp. $i_{\overline{w}}^*
\sE(\overline{w}) = \underline{\kk}_{\overline{w}}[ \dim X_{\overline{w}}]$). Each $\sE(w,\kk)$
(resp. $\sE(\overline{w},\kk)$) is self-dual and any indecomposable parity
sheaf on $G/B$ (resp. $G/P_s)$ is isomorphic to
$\sE(w,\kk)[m]$ for some $w \in W$ and $m \in \mathbb{Z}$
(resp. $\sE(\overline{w},\kk)[m]$ for some $w \in W/ \langle s \rangle$
and $m \in \mathbb{Z}$). \end{thm}

If the context is clear we will write $\sE(w)$ instead of $\sE(w,\kk)$
 and $\sE(\overline{w})$ instead of $\sE(\overline{w},\kk)$.

%\begin{remark} The above theorem applies if we consider the
%  non-equivariant category $D^b_{\Lambda}(X)$.
%\end{remark}

Given a $*$-parity sheaf $\sE \in D^b_B(G/B)$ and $w \in W$ we
may write $i_w^* \sE \cong V(w) \otimes_{\kk} \underline{\kk}_w$ for some finitely-generated graded free
$\kk$-module $V(w) = \oplus V(w)_i$. We define the character of
$\sE$ in the Hecke algebra to be
\begin{equation*}
\ch(\sE) = \sum_{i \in \mathbb{Z}, w \in W} (\rk  V(w)_i)v^{\ell(w) + i}H_w
\end{equation*}
where $\rk V(w)_i$ denotes the rank of the free $\kk$-module $V(w)_i$.

\begin{remark} If $\kk$ is a field and $\sE$ is a $*$-parity sheaf then it is easily seen that
  $\ch(\sE)$ agrees with the character of $\sFor (\sE) \in
  D^b_{\Lambda}(G/B)$ as defined in the introduction.
\end{remark}

A similar character map exists for $D^b_B(G/P_s)$ for a simple
reflection $s \in S$. Let $\He_s$ denote
the left ideal $\He \h_s$ in $\He$. Then $\He_s$ is free with basis
$H_w \h_s$ for $w \in W^s$, where $W^s \subset W$ denotes the subset
of elements $w \in W$ such that $s \notin \dR(w)$. 
Given a $*$-parity sheaf $\sE \in
D^b_{B}(G/P_s)$ and $w \in W^s$ we can write $i_{\overline{w}}^* \sE \cong
V(w) \otimes \underline{\kk}_{\overline{w}}$ for some finitely-generated graded free 
$\kk$-module $V(w)= \oplus V(w)_i$. We define
the character of $\sE$ to be:
\begin{equation*}
\ch(\sE) = \sum_{i \in \mathbb{Z}, w \in W^s} (\rk V(w)_i)v^{\ell(w) + i}H_w\h_s.
\end{equation*}

For any $s \in S$ we have obvious maps given by inclusion and multiplication:
\begin{equation*}
\xymatrix@C=3cm{ \He \ar@(ur, ul)[r]^{ \cdot \h_s} & \ar@(dl, dr)[l]^{\inc} \He_s}
\end{equation*}
The quotient map $\pi_s : G/B \to G/P_s$ induces functors:
\begin{equation*}
\xymatrix@C=1cm{ D^b_B(G/B) \ar@(ur, ul)[r]^{ \pi_{s*}} &
  \ar@(dl, dr)[l]^{\pi_s^*} D^b_{B}(G/{P_s})}
\end{equation*}
The following lemma is well-known (see \cite{SpIH}, Lemme 2.6):

\begin{lem} \label{lem-parchar}
\begin{enumerate}
\item If $\sE \in D^b_B(G/B)$ is parity, then so is ${\pi_s}_* \sE$ and
\begin{equation*}
\ch({\pi_s}_* \sE) = \ch(\sE) \h_s.
\end{equation*}
\item If $\sE \in D^b_{B}(G/P_s)$ is parity, then so is $\pi_s^* \sE$ and
\begin{equation*}
\ch(\pi_s^* \sE[1]) = \inc (\ch(\sE)).
\end{equation*}
\item If $\sE \in D^b_B(G/B)$ or $D^b_{B}(G/P_s)$  is parity then
  \[ \ch(\sE[1]) = v^{-1} \ch(\sE). \]
\end{enumerate}
\end{lem}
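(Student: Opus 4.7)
The plan is to address the three parts in the order (3), (2), (1), from easiest to most involved.

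Part (3) is a bookkeeping statement about shifts: if $i_w^*\sE \cong V(w) \otimes \uk_w$, then $i_w^*(\sE[1]) \cong V(w)[1] \otimes \uk_w$ with $V(w)[1]_i = V(w)_{i+1}$, so $P(V(w)[1]) = v^{-1}P(V(w))$ and the formula follows by summing over $w$.

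For part (2), the geometric input is that $\pi_s$ is smooth with one-dimensional fibers, restricting to an isomorphism $X_w \xrightarrow{\sim} X_{\overline w}$ for $w \in W^s$ and to a trivial $\mathbb{A}^1$-bundle $X_{ws} \to X_{\overline w}$. Hence $\pi_s^*$ preserves the graded stalks (the stalk $V(\overline w)$ appears unchanged on both $X_w$ and $X_{ws}$), so $\pi_s^*\sE$ and $\pi_s^*\sE[1]$ are $*$-parity. To obtain the character identity I would expand $\ch(\pi_s^*\sE[1])$ stalk-by-stalk over the Bruhat cells of $G/B$ and compare with $\inc(\ch(\sE))$, computed via $H_w\h_s = H_{ws} + v^{-1}H_w$ for $w \in W^s$; the $[1]$-shift supplies the factor of $v^{-1}$ needed to match the coefficient of $H_w$.

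Part (1) is the most substantive. Since $\pi_s$ is proper, proper base change identifies the stalk of $\pi_{s*}\sE$ at a point of $X_{\overline w}$ (with $w \in W^s$) with the hypercohomology of the fiber $\PP^1$ with coefficients in $\sE|_{\PP^1}$. This fiber is stratified as a closed point inside $X_w$ together with the complementary open $\mathbb{A}^1$ inside $X_{ws}$, and $\sE$ restricts to constant sheaves with fibers $V(w)$ and $V(ws)$ on these pieces respectively. The open--closed distinguished triangle produces a long exact sequence
\[
\cdots \to V(ws)_{k-2} \to H^k(\PP^1,\sE|_{\PP^1}) \to V(w)_k \xrightarrow{\delta} V(ws)_{k-1} \to \cdots
\]
where the shift by $2$ reflects $H^*_c(\mathbb{A}^1) \cong \uk[-2]$. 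Reducing to the case where $\sE$ is $*$-even, both $V(w)$ and $V(ws)$ lie in even degrees, while $\delta$ raises cohomological degree by one; its source and target therefore cannot both be nonzero, so $\delta=0$. The stalk splits as $V(w) \oplus v^2 V(ws)$ with Poincar\'e polynomial $P(V(w)) + v^2 P(V(ws))$, showing at the same time that $\pi_{s*}\sE$ is $*$-parity. The identity $\ch(\pi_{s*}\sE) = \ch(\sE)\h_s$ then follows from an explicit comparison in the $H_x$-basis, using $H_w\h_s = H_{ws} + v^{-1}H_w$ when $ws > w$ and $H_w\h_s = vH_w + H_{ws}$ when $ws < w$.

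The main technical point is the parity vanishing of $\delta$ in part (1); once this is established the rest of the proof is an explicit calculation in the Hecke algebra matched against the stalk formula.
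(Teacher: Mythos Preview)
Your argument is correct. Parts (2) and (3) match the paper, which dispatches them in one line each as consequences of the definitions and of the decomposition $\pi_s^{-1}(X_{\overline w}) = X_w \sqcup X_{ws}$.

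For part (1) you take a genuinely different route. The paper argues by induction on the number of strata meeting the support of $\sE$: the base case is $\sE \cong i_{w!}\uk_w[\ell(w)]$, for which $\pi_{s*}\sE$ is computed directly (distinguishing $ws>w$, where $\pi_s|_{X_w}$ is an isomorphism, from $ws<w$, where it is a $\C$-bundle); the inductive step peels off an open stratum via the triangle $i_{w!}i_w^*\sE \to \sE \to i_*i^*\sE \to$ on $G/B$, and the character identity follows by additivity. You instead work fibre by fibre: proper base change reduces the stalk computation to hypercohomology on a single $\PP^1$, and the open--closed triangle you use lives on that $\PP^1$ rather than on $G/B$. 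The parity vanishing of your connecting map $\delta$ plays the role that the induction plays in the paper. Your approach is more direct and delivers the stalks of $\pi_{s*}\sE$ explicitly as $V(w)\oplus V(ws)[-2]$; the paper's approach is more modular, reducing everything to the behaviour of $\pi_{s*}$ on standard sheaves $i_{w!}\uk_w$, which makes the Hecke-algebra bookkeeping slightly shorter and generalises more transparently when the fibres are not $\PP^1$'s.
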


\begin{proof}
We first show the first three relations for $*$-parity sheaves. Statement (3) is a
straightforward consequence of the definitions and (2) follows from
the definitions and the fact that $\pi_s^{-1}(X_{\overline{w}}) = X_w
\sqcup X_{ws}$ for $w \in W$. It remains to show (1).

Let $\sE$ be a $*$-parity sheaf. 
We prove (1) by induction on the number of $w \in W$ for which $i_w^*
\sE \ne 0$.
 If this number is one, then (by definition of $*$-parity) $\sE$ is necessarily
isomorphic to a direct sum of shifts of $i_{w!} \underline{\kk}_w$, for some $w \in
W$. We may assume that $\sE \cong i_{w!} \underline{\kk}_w[\ell(w)]$.
Let us write $\overline{w}$ for the image of $w$ in $W/\langle s
\rangle$. If $ws > w$ then $\pi_s$ restricts to an isomorphism $X_w
\to X_{\overline w}$. Hence 
\[ \pi_{s*} \sE \cong i_{\overline{w}!}
\underline{\kk}_{\overline w}[\ell(w)]. \]
If $ws < w$ then the restriction of $\pi_s$ to $X_w$ induces a (trivial)
$\mathbb{C}$-bundle over $X_{\overline{w}}$, hence
\[ \pi_{s*} \sE \cong i_{\overline{w}!}
\underline{\kk}_{\overline w}[\ell(w)-2]. \]
A simple calculation in the Hecke algebra then shows that in both cases
\[
\ch(\pi_{s*} \sE ) = \ch( \sE)\h_s
\]
as claimed.

We now turn to the general case. We may assume without loss of
generality that $\sE$ is $*$-parity. Choose $w \in W$ so that $X_w$ is
open in the support of $\sE$ and let $i : \overline{\supp \sE}
\setminus X_w \hookrightarrow G/B$ denote the inclusion. Then $i_w^!
\sE \cong i_w^* \sE$ and we have a distinguished triangle of $*$-parity
sheaves
\[
i_{w_!}i_w^* \sE \to \sE \to i_*i^* \sE \stackrel{[1]}{\to}
\]
By induction $\pi_{s*}$ applied to the first or third term is
$*$-parity and (1) holds. It follows that the same
is true of $\sE$ because $\ch(\sE) = \ch(i_{w_!}i_w^* \sE ) + \ch(i_*
i^* \sE)$ and
$\ch(\pi_{s*} \sE) = \ch(\pi_{s*} i_{w_!}i_w^* \sE ) + \ch(\pi_{s*}
i_* i^* \sE)$.

It remains to see that $\pi_{s*}$ and $\pi_s^*$ preserve the classes of
parity sheaves. However this follows immediately because $\mathbb{D}$
interchanges $*$-parity and $!$-parity sheaves, $\pi_{s*}
\mathbb{D} \cong \mathbb{D} \pi_{s*}$ (as $\pi_s$ is proper) and
$(\pi_s^*[1])  \mathbb{D}  \cong  \mathbb{D} ( \pi_s^*[1])$ (because $\pi_s$
is a smooth fibration with fibres of complex dimension 1).
\end{proof}

Consider $G$ as a $B \times B$-space via $(b_1, b_2) \cdot g := b_1 g
b_2^{-1}$.
 As the second copy of $B$-acts freely on $G$, the
quotient equivalence (\cite[2.6.2]{BLu}) yields an equivalence of triangulated
categories:
\[
Q^*: D^{b}_B(G/B) \stackrel{\sim}{\to} D^b_{B \times B}(G).
\]
Consider the inversion map $i : G \to G$. Then this is $B \times
B$-equivariant with respect to the swap map $B \times B \to B \times
B : (b_1, b_2) \mapsto (b_2, b_1)$. This induces an equivalence
\[
i^* : D^b_{B \times B}(G) \to D^b_{B \times B}(G)
\]
Consider the functor $\iota := (Q^*)^{-1}i^* Q^{*} : D^b_B(G/B) \to
D^b_B(G/B)$. Then $\iota$ commutes with $\mathbb{D}$ (see \cite[7.5.2]{BLu}).
It is easy to see that $\iota$ preserves parity sheaves
and that, for a parity sheaf $\sE \in D^b_B(G/B)$,
\begin{equation} \label{eq:jinvolution}
\ch ( \iota (\sE) ) = j( \ch ( \sE))
\end{equation}
where $j : \He \to \He$ is the anti-involution defined by $j(H_w)=
H_{w^{-1}}$ and $j(v) = v$.

Define endofunctors on $D^b_{B}(G/B)$ by
\[
(-)\ver_s := \pi_s^* {\pi_s}_*(-)[1] \text{ and }  \ver_s (-) := 
\iota \pi_s^* {\pi_s}_* \iota(-)[1].
\]
Then the functors $(-)\ver_s$ and $\ver_s (-)$ preserve parity
sheaves; the shift is chosen so that $\D( \ver_s \sF) \cong \ver_s (\D\sF)$ and
$\D( \sF \ver_s) \cong (\D \sF) \ver _s$. By \eqref{eq:jinvolution}
and the above lemma,
\begin{equation*}
\ch(\sE \ver_s) = \ch(\sE)\h_s \text{ and }  \ch(\ver_s \sE) =  \h_s\ch(\sE)
\end{equation*}
for parity sheaves $\sE \in D^b_B(G/B)$.

The first result about the characters of parity sheaves is the following:

\begin{prop} \label{cor-dual} For all $w \in W$, 
$\ch(\sE(w)) \in \He$ is self-dual. \end{prop}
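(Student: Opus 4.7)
The plan is to induct on $\ell(w)$, using a Bott-Samelson-type object whose character is manifestly self-dual in $\He$ and which contains $\sE(w)$ as a summand with trivial shift. The base case $w = id$ is immediate: the support of $\sE(id)$ is a point, so $\sE(id) = \uk_{id}$ and $\ch(\sE(id)) = H_{id} = 1$, which is self-dual.

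For the inductive step, fix a reduced expression $w = s_1 s_2 \cdots s_k$ and form
\[
\sF := \uk_{id}\, \ver_{s_1}\, \ver_{s_2} \cdots \ver_{s_k}.
\]
By Lemma \ref{lem-parchar}, both $\pi_s^*$ and $\pi_{s*}$ preserve parity, so $(-)\ver_s$ does too; hence $\sF$ is a parity complex. Moreover, by the formula established just before Proposition \ref{cor-dual}, $\D(\sE \ver_s) \cong (\D\sE)\ver_s$, and since $\D\uk_{id} \cong \uk_{id}$ one gets $\D \sF \cong \sF$. Iterating $\ch(\sE \ver_s) = \ch(\sE)\h_s$ yields $\ch(\sF) = \h_{s_1} \h_{s_2} \cdots \h_{s_k}$, which is self-dual in $\He$: each $\h_{s_i}$ is self-dual by the defining property of the Kazhdan-Lusztig basis, and the bar involution is a ring homomorphism (it respects the Hecke quadratic and palindromic braid relations, since the braid word and its term-by-term inverse are both palindromes).

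By Theorem \ref{thm-clas} and Krull-Schmidt, write $\sF \cong \bigoplus_j \sE(x_j)[n_j]$. Support considerations force $x_j \le w$. Tracking the $*$-stalk on the open stratum $X_w$ through each $\ver_{s_i}$ produces $i_w^* \sF \cong \uk_w[\ell(w)]$, so exactly one summand satisfies $(x_j, n_j) = (w, 0)$, and this summand is $\sE(w)$; all remaining summands have $\ell(x_j) < \ell(w)$. Self-duality of $\sF$ together with self-duality of each $\sE(x_j)$ (Theorem \ref{thm-clas}) forces the multiset $\{(x_j, n_j)\}$ to be invariant under $(x, n) \mapsto (x, -n)$.

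Using Lemma \ref{lem-parchar}(3), the character identity reads
\[
\h_{s_1} \cdots \h_{s_k} \;=\; \ch(\sE(w)) \;+\; \sum_{x_j < w} v^{-n_j}\, \ch(\sE(x_j)).
\]
Grouping summands by $x_j$, the combined coefficient of each $\ch(\sE(x_j))$ takes the form $\sum_n m_{x,n} v^{-n}$ with $m_{x,n} = m_{x,-n}$, hence is bar-invariant. By the induction hypothesis each $\ch(\sE(x_j))$ with $x_j < w$ is self-dual, so the right-hand sum minus $\ch(\sE(w))$ is self-dual. Since the left-hand side is self-dual, so is $\ch(\sE(w))$. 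The main technical point is the assertion that $\sE(w)$ appears in $\sF$ with multiplicity one and trivial shift, i.e.\ that the top-stratum stalk of the Bott-Samelson is $\uk_w[\ell(w)]$; this is the standard open-stratum computation underlying the inductive construction of parity sheaves in \cite{JMW2}.
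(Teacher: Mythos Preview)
Your argument is correct and follows the same inductive strategy as the paper: produce a Verdier self-dual parity complex containing $\sE(w)$ as a summand with trivial shift, use Krull--Schmidt and self-duality to force the shifts of the remaining summands to be symmetric, and then invoke the induction hypothesis on those smaller summands. The only difference is that the paper applies a single $\ver_s$ to $\sE(sw)$ (so the ambient object is $\ver_s\sE(sw)$ with character $\h_s\ch(\sE(sw))$, already self-dual by induction), whereas you unwind all the way back to $\uk_{id}$ and build the full Bott--Samelson object; the paper's version is slightly more economical but the logic is identical.
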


\begin{proof} We proceed via induction on $\ell(w)$ with the base case
  being trivial. Let us fix $w$ and choose $s \in S$ with $ws < w$. By
  Theorem \ref{thm-clas} we may write 
\begin{equation*}
(\sE(ws)) \ver_s  \cong \sE(w) \oplus \sG
\end{equation*}
where
\begin{equation*}
\sG \cong \bigoplus_{x < w \atop \eta \in \Z } \sE(x)[\eta] ^{\oplus m_{x,\eta}}.
\end{equation*}
The Verdier self-duality of $(\sE({ws}))\ver_s $, $\sE({w})$ and each $\sE(x)$ for
$x < w$ together with Krull-Remak-Schmidt implies
\begin{equation*}
m_{x, -\eta} = m_{x, \eta}.
\end{equation*}
By induction, the $\ch(\sE(x))$ for $x < w$ are self-dual. Hence both $\ch(\sG)$ and $\ch(\ver_s \sE({sw})) = \h_s \ch(\sE({sw}))$ are self-dual. Thus so is $\ch(\sE(w))$.
\end{proof}

% \begin{cor} \label{cor:self-dual-char}
%   The character of a self-dual parity sheaf is self-dual.
% \end{cor}

% \begin{proof}
% Let $\sF$ be a self-dual parity sheaf. By Theorem \ref{thm-clas} we can decompose $\sF$ as
% \[
% \sF = \bigoplus \sE(w)[\eta]^{ \oplus m_{x, \eta}}.
% \]
% As in the previous proof we have $m_{x, \eta} = m_{x, -\eta}$. Now
% \[
% \ch(\sF) = \sum m_{x,\eta} v^{-\eta}\ch(\sE(w))
% \]
% and the self-duality of $\ch(\sF)$ follows from the self-duality of
% each $\ch(\sE(w))$, proved in the previous proposition.
% \end{proof}

Let $s \in S$ be a simple reflection. The next proposition relates parity sheaves on $G/P_s$ to those on $G/B$:

\begin{prop} \label{cor:Ps} Let $w \in W$ be such that $ws < w$ and
  denote by $\overline{w}$ the image of $w$ in $W/\langle s \rangle$.
We have isomorphisms
\begin{equation*}
\pi_s^* \sE({\overline{w}})[1] \cong \sE(w)
\end{equation*}
and
\begin{equation*}
{\pi_s}_* \sE(w) \cong \sE({\overline{w}})[-1] \oplus \sE({\overline{w}})[1].
\end{equation*}
\end{prop}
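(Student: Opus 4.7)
The plan is to prove (a) first, then deduce (b) by applying $\pi_{s*}$ to the isomorphism in (a). For (a), I would verify that $\pi_s^*\sE(\overline{w})[1]$ satisfies the characterizing properties of $\sE(w)$ given by Theorem \ref{thm-clas}.

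The morphism $\pi_s$ is smooth and proper of relative complex dimension one, so $\pi_s^*[1]$ commutes with Verdier duality (as used in the proof of Lemma \ref{lem-parchar}) and hence preserves parity complexes. The hypothesis $ws<w$ means that $w$ is the longer representative of its coset $\overline{w}$, which gives $\pi_s^{-1}(\overline{X_{\overline{w}}}) = \overline{X_w}$, and moreover $\pi_s|_{X_w}:X_w \to X_{\overline{w}}$ is a trivial $\mathbb{A}^1$-bundle. Applying Theorem \ref{thm-clas} on $G/P_s$ we have $i_{\overline{w}}^* \sE(\overline{w}) \cong \uk_{\overline{w}}[\ell(w)-1]$, and a direct computation yields
\[
i_w^*\bigl( \pi_s^* \sE(\overline{w})[1] \bigr) \;\cong\; \uk_w[\ell(w)] \;=\; \uk_w[\dim X_w].
\]
By Theorem \ref{thm-clas}, it follows that $\sE(w)$ is a direct summand of $\pi_s^*\sE(\overline{w})[1]$ with multiplicity one, so we may write $\pi_s^*\sE(\overline{w})[1] \cong \sE(w) \oplus \sG$ for some parity complex $\sG$ supported in $\overline{X_w} \setminus X_w$.

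The hard part is to show $\sG = 0$. My approach is to apply $\pi_{s*}$: using the projection formula together with $\pi_{s*}\uk_{G/B} \cong \uk_{G/P_s} \oplus \uk_{G/P_s}[-2]$ (coming from the fact that $\pi_s$ is Zariski-locally a trivial $\mathbb{P}^1$-bundle) we obtain
\[
\pi_{s*}\sE(w) \oplus \pi_{s*}\sG \;\cong\; \pi_{s*}\pi_s^*\sE(\overline{w})[1] \;\cong\; \sE(\overline{w})[1] \oplus \sE(\overline{w})[-1],
\]
a decomposition into exactly two indecomposable parity sheaves on $G/P_s$. A proper base change calculation of $i_{\overline{w}}^* \pi_{s*} \sE(w)$ using the decomposition $\pi_s^{-1}(X_{\overline{w}}) = X_w \sqcup X_{ws}$ --- with $\pi_s|_{X_w}$ a trivial $\mathbb{A}^1$-bundle and $\pi_s|_{X_{ws}}$ an isomorphism --- together with the non-vanishing of $i_{ws}^*\sE(w)$ shows that $\pi_{s*}\sE(w)$ already has rank two on $X_{\overline{w}}$. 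By Krull-Schmidt in $D^b_B(G/P_s)$, this forces $\pi_{s*}\sG$ to vanish. Finally, any nonzero summand $\sE(y)[n]$ of $\sG$ would have a nonzero pushforward under $\pi_s$ (because $\pi_s|_{X_y}$ is either an isomorphism or a line bundle), so $\sG = 0$ as desired.

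For part (b), we simply combine (a) with the projection formula:
\[
\pi_{s*}\sE(w) \;\cong\; \pi_{s*}\pi_s^*\sE(\overline{w})[1] \;\cong\; \sE(\overline{w})[1] \otimes \pi_{s*}\uk \;\cong\; \sE(\overline{w})[1] \oplus \sE(\overline{w})[-1].
\]
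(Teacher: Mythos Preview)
Your argument is correct and follows essentially the same route as the paper's: write $\pi_s^*\sE(\overline{w})[1]\cong \sE(w)\oplus\sG'$, push forward, use $\pi_{s*}\pi_s^*\cong \mathrm{id}[1]\oplus\mathrm{id}[-1]$, and compare. The only cosmetic difference is that the paper finishes via characters (showing $\ch(\sG')=0$) while you finish via Krull--Schmidt (showing $\pi_{s*}\sG'=0$ and hence $\sG'=0$).

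One step deserves a word of justification: you invoke ``the non-vanishing of $i_{ws}^*\sE(w)$'' to see that $\pi_{s*}\sE(w)$ already contains both $\sE(\overline{w})[\pm 1]$, but a priori the rank-one stalk of $\pi_s^*\sE(\overline{w})[1]$ on $X_{ws}$ could sit in $\sG'$ rather than in $\sE(w)$. The cleanest fix is to avoid the stalk at $ws$ altogether: the contribution from $X_w$ alone forces $\sE(\overline{w})[-1]$ to be a summand of $\pi_{s*}\sE(w)$, and since $\pi_{s*}\sE(w)$ is Verdier self-dual (as $\pi_s$ is proper and $\sE(w)\cong\D\sE(w)$), $\sE(\overline{w})[1]$ must occur as well. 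The paper's proof is equally terse at this point.
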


\begin{proof} As $\sE(w)$ is a direct summand of
  $\pi_s^*\sE({\overline{w}})[1]$ and the restriction of 
$\pi_s^*\sE({\overline{w}})[1]$ to $U = X_w \sqcup X_{ws}$ is isomorphic
to a shifted constant sheaf $\underline{\kk}_U[\ell(w)]$ (and hence is indecomposable) we have
\begin{equation*}
\ch(\sE(w)) = H_w + v^{-1}H_{ws} + \sum_{x < w \atop x \ne ws} m_x H_x
\end{equation*}
for some $m_x \in \mathbb{N}[v,v^{-1}]$.
It follows (by considering $i_{\overline{w}}^* {\pi_s}_* \sE(w)$) that
\begin{equation}
{\pi_s}_* \sE(w) \cong \sE({\overline{w}})[1] \oplus \sE({\overline{w}})[-1] \oplus \sG
\end{equation}
for some parity sheaf $\sG$. We may also decompose
\begin{equation*}
\pi_s^* \sE({\overline{w}})[1] \cong \sE({w}) \oplus \sG^{\prime}.
\end{equation*}
Hence
\begin{equation*}
{\pi_s}_*\pi_s^* \sE({\overline{w}})[1] \cong \sE({\overline{w}})[1] \oplus \sE({\overline{w}})[-1] \oplus \sG \oplus {\pi_s}_* \sG^{\prime}.
\end{equation*}
However, because $h\h_s = (v+v^{-1})h$ for any $h \in \He_s$, Lemma
\ref{lem-parchar} yields
\begin{equation*}
\ch({\pi_s}_*\pi_s^* \sE(\overline{w})[1] ) = (v + v^{-1}) \ch( \sE(\overline{w}))
\end{equation*}
and so $\ch(\sG) = \ch(\pi_{s*} \sG') = 0$. Hence
$\sG$ and $\sG'$ are zero.
\end{proof}

%\begin{remark} In \cite{Soe} Soergel considers the category $\mathcal{K}$ defined to be the smallest full additive subcategory of $D^b_{\Lambda}(G/B)$ such that:
%\begin{enumerate}
%\item the skyscraper on the point $B/B \in G/B$ is in $\mathcal{K}$;
%\item if $\sE$ is in $\mathcal{K}$ then so is $\sE \ver_s$;
%\item if $\sE$ is in $\mathcal{K}$ then so are any objects isomorphic to direct summands of $\sE$.
%\end{enumerate}
%He shows (by establishing an equivalence between $\mathcal{K}$ and a representation theoretic category) that if the characteristic of $\kk$ is at least the Coxeter number of $W$ then one has a classification of the indecomposable objects in $\mathcal{K}$ as in Theorem \ref{thm-clas}. It is immediate that the objects of $\mathcal{K}$ are parity sheaves, however the converse is surprising (at least for the author).
%\end{remark}

In $D^b_{\Lambda}(G/B)$ and $D^b_{\Lambda}(G/P_s)$ there exist the
middle perversity intersection cohomology
sheaves $\IC(w, \kk)$ and $\IC(\overline{w}, \kk)$ (see \cite{BBD, decperv}). The intersection cohomology complex
$\IC(w,\kk)$ is determined up to (canonical) isomorphism by the following conditions:
\begin{enumerate}
\item[IC1)] $i_x^* \IC(w,\kk) = 0$ for $x \not < w$;
\item[IC2)] $i_w^* \IC(w,\kk) \cong \underline{\kk}_w[\dim X_w]$;
\item[IC3)] $\mathcal{H}^j(i_x^* \IC(w,\kk)) = 0$ for $x < w$ and $j
  \ge -\dim X_x$;
\item[IC4)] $\mathcal{H}^j(i_x^! \IC(w,\kk)) = 0$ for $x < w$ and $j
  \le -\dim X_x$.
\end{enumerate}
Entirely analogous conditions define $\IC(\overline{w}, \kk)$. 
If $\kk$ is a field then the basic properties of the intersection
cohomology complexes are discussed in \cite{BBD}. If $\kk$ is not a
field, then there are (at least) two choices of what one means by the
intersection cohomology complex with coefficients in $\kk$; the definition given above
corresponds to the choice of perversity $p$ rather than $p_+$ (in the
notation of \cite{decperv}). We will never need the intersection cohomology
complex corresponding to $p_+$.
If $\kk$ is a field then the
intersection cohomology complexes  $\IC(w,\kk)$ are (Verdier)
self-dual. This is no longer true in general if $\kk$ is a principal ideal
domain.

The intersection cohomology complexes admit equivariant lifts
$\IC_B(w, \kk) \in D^b_B(G/B)$ and $\IC_B(\overline{w}, \kk) \in D^b_B(G/P_s)$  which are uniquely determined up to
isomorphism by requiring that their image under the forgetful functor
is the corresponding non-equivariant intersection cohomology complex (see
\cite[5.2]{BLu}). Equivalently, $\IC_B(w, \kk)$ is the uniqe object
satisfying the equivariant analogues of IC1), IC2), IC3) and IC4)
above (where we replace $\IC(w,\kk)$ by $\IC_B(w,\kk))$
throughout). As with parity sheaves, we write $\IC(w)$, $\IC_B(w)$,
$\IC(\overline{w})$ and $\IC_B(\overline{w})$ instead of $\IC(w,\kk)$
etc. if the ring of
coefficients $\kk$ is clear from the context.

The first relationship between parity sheaves and intersection
cohomology complexes is the following:

% \begin{prop} \label{prop:IC} Suppose that $\kk$ is a field. For all $w \in W$, $\sE(w) \cong \IC_B(w)$ if and only if $\ch(\sE(w))  =
% \h_w$. Hence $\ch(\sE(w))  = \h_w$ if and only if $\sFor(\sE(w)) \cong
% \IC(w)$.

% Similarly, if $s \in S$ and $w \in W$ satisfies $ws < w$, then
% \[ \sE(\overline{w}) \cong
% \IC_B(\overline{w}) \Leftrightarrow \ch(\sE(\overline{w})) = \h_w
% \Leftrightarrow \sFor(\sE(\overline{w})) = \IC(\overline{w}). \]
% \end{prop}

% \begin{proof} Recall that $\IC(w)$ is the unique sheaf in $D^b_{\Lambda}(G/B)$ satisfying the four conditions:
% \begin{enumerate}
% \item $\IC(w)$ is Verdier self-dual;
% \item $i_y^* \IC(w) = 0$  for $y \nleq w$;
% \item $i_w^* \IC(w) \cong \underline{\kk}_w[\ell(w)]$;
% \item $i_x^* \IC(w)$ is concentrated in degrees $< -\ell(x)$ for $x < w$.
% \end{enumerate}
% Note that 1), 2) and 3) are always satisfied by $\sFor(\sE(w))$.

% Since the support of $\sE(w)$ is contained in $\overline{X_w}$ and
% $\ch \sE(w)$ is self-dual, we have $\ch \sE(w) = \h_w + \sum a_y(v)
% \h_y$ for some $a_y(b) \in \mathbb{Z}[v,v^{-1}]$ satisfying $a_y(v) =
% a_y(v^{-1})$. Then it is straightforward to see that $\ch \sE(w) =
% \h_w$ if and only if $\sFor(\sE(w))$ satisfies (4).

% For the second part of the proposition, one can show that 
% that a sheaf $\sF$ on $G/P_s$
% is isomorphic to an intersection cohomology complex if and only if $\pi_s^*\sF [1] $
% is isomorphic to an intersection cohomology complex on $G/B$ (see, for
% example, Proposition 4.2.5 and Corollaire 4.2.6.2 in \cite{BBD}). The
% second part of the proposition now follows.
% \end{proof}

\begin{prop} If $\kk$ is a field of characteristic 0 then
  $\sE(w) \cong \IC_B(w)$. \label{prop:parity=IC}
\end{prop}

\begin{proof}
  The intersection cohomology complexes $\IC_B(w)$ are simple objects in
  the heart of the perverse $t$-structure on $D^b_B(G/B)$ (see \cite{BBD}) and are
  therefore indecomposable. Hence we will be done by Theorem
  \ref{thm-clas} if we can show that $\IC_B(w)$ is a parity sheaf.

Because $\kk$ is a field of characteristic
zero, $\IC_B(w)\vartheta_s$ is isomorphic
to a direct sum of shifts of intersection cohomology complexes for all
$w \in W$ (by the
  decomposition theorem (see \cite[6.2.5]{BBD} or
\cite[2.1.1]{dCM2}) and \cite[5.3]{BLu}) together with the fact that
$\pi_s$ is a smooth fibration). Moreover it is easy to see that if $ws
< w$ then $\IC_B(w)$ is a direct summand of $\IC_B(ws)
\vartheta_s$. Clearly $\IC_B(id) $ is parity, and
hence all $\IC_B(w)$ are parity by induction.
\end{proof}

Now let $\F \subset \kk$ denote a
finite subfield of $\kk$ and let $p$ denote the characteristic of both
fields. Let $\O$ denote a complete discrete valuation
ring of characteristic 0 with residue field $\F$ and field of fractions $\K$.
One has functors of extension of scalars:
\begin{gather*}
(- )\otimes_{\O}^L \F : D^b_B(G/B, \O) \to D^b_B(G/B, \F)  \\
(- )\otimes_{\F} \kk : D^b_B(G/B, \F) \to D^b_B(G/B, \kk)   \\
(- )\otimes_{\O} \K : D^b_B(G/B, \O) \to D^b_B(G/B, \K)   
\end{gather*}
By Lemma 2.19 and Proposition 2.22 in \cite{JMW2} one has:
\begin{gather}
\label{eq:parity Qp}
\sE(y, \O) \otimes_{\O} \K \text{ and } \sE(y, \O)
\otimes_{\O}^L \F \text{ are parity sheaves;} \\
  \label{eq:modreduc} 
 \sE(y, \O) \otimes_{\O}^L \F \cong \sE(y, \F).
\end{gather}

\begin{lem} \label{lem:ch=} For all $y \in Y$ we have
\[
\ch(\sE(y, \F)) = \ch(\sE(y, \O)) = \ch( \sE(y, \O) \otimes_\O \K).
\]
Similarly, if $\sG$ is a $*$-parity sheaf with coefficients in $\F$
then 
\[
\ch(\sG) = \ch(\sG \otimes_\F \kk).\]
\end{lem}

\begin{proof}
Fix $x \in W$. By definition of $*$-parity we can write $i^*_x \sE(y, \O) = V(x)
\otimes \underline{\O}_x$ for some finitely generated
  graded free $\O$-module $V(x)$. Because the
  stalks of $\underline{\O}_x$ 
are free\footnote{By the stalk of an equivariant sheaf or complex on $X$ we mean the
  stalk of the corresponding sheaf or complex on the Borel construction of $X$, see \cite{BLu}.}
 (and hence flat) and $i^*$
  commutes with extension of scalars we have isomorphisms
\begin{gather*}
i^*_x(\sE(y, \O) \otimes^L_{\O} \F) \cong (V(x) \otimes_{\O} \F) \otimes_{\F}
\underline{\F}_x, \\
i^*_x(\sE(y, \O) \otimes^L_{\O} \K) \cong (V(x) \otimes_{\O} \K) \otimes_{\K}
\underline{\K}_x.
\end{gather*}
Hence $\ch (\sE(y, \O) \otimes_{\O} \F) = \ch (\sE(y, \O)) =
\ch (\sE(y, \O) \otimes_{\O} \K)$. The first statement now follows
from \eqref{eq:modreduc}. The proof of the second statement is
entirely analogous.
\end{proof}

\begin{lem} \label{lem:parityext}
For all $y \in W$ we have
\[
\sE(y, \F) \otimes_\F \kk \cong \sE(y,\kk).
\]
\end{lem}

\begin{proof}
Firstly,  $\sE(y,\F)\otimes^L_\F \kk$ is parity by \cite[Lemma
2.19]{JMW2} and so we have to show that $\sE(y,\F)\otimes_\F \kk$ is 
indecomposable. Using \cite[Proposition 2.4]{JMW2} one deduces
easily that if we set $A = \End(\sE(y, \F))$ then
$\End(\sE(y,\F) \otimes^L_\F \kk) \cong A
\otimes_\F \kk$. Hence we will be done if we can show that $A
\otimes_\F \kk$ is a local ring which is the case if 
$A/\rad A \cong \F$ (where $\rad A$ denotes the Jacobsen radical of
$A$). However by  \cite[Corollary 2.6]{JMW2} we have a surjection
\[
\phi: A = \End(\sE(y, \F)) \twoheadrightarrow \End(i_y^* \sE(y, \F)) = \F
\]
and therefore $\rad A = \ker \phi$ because $A$ is a local ring. It follows that
$A/\rad A \cong \F$ as claimed.
\end{proof}

It follows that the characters of the indecomposable parity sheaves
only depend on the characteristic:

\begin{cor} \label{cor:parityextch}
For all $y \in W$ we have
\[ \ch(\sE(y, \F)) = \ch(\sE(y, \kk)). \]
\end{cor}

A simple and useful consequence of the above results is the following:

\begin{thm} \label{thm:Ncomb}
For any $y \in W$ we have
\[
\ch(\sE(y, \kk)) = \sum_{x\in W} Q_{xy} \h_x
\]
for some Laurent polynomials $Q_{xy} \in \N[v,v^{-1}]$.
\end{thm}

\begin{proof} By Corollary \ref{cor:parityextch} it is enough to prove
  the result for $\ch(\sE(y, \F))$. By Lemma \ref{lem:ch=} we have
\[
\ch (\sE(y, \F)) = \ch (\sE(y, \O)) = \ch (\sE(y, \O) \otimes_{\O}
  \K).
\]
Now \eqref{eq:parity Qp}  together with Theorem \ref{thm-clas} allows
us to write
\[
\sE(y, \O) \otimes_{\O}
  \K \cong \bigoplus_{x \in W} V(x,y) \otimes \sE(x, \K)
\]
for some finite dimensional graded vector spaces $V(x,y) = \bigoplus
V(x,y)_i$. Hence we have
\[
\ch(\sE(y, \F)) = \ch(\sE(y, \O) \otimes_{\O}^L \F) = \sum_{x
  \in W} Q_{xy} \ch(\sE(x,\K))
\]
where $Q_{xy} = \sum v^i\dim V(x,y)_i$. The result then follows from
Propositions \ref{prop:IC} and \ref{prop:parity=IC} which imply that
$\ch(\sE(x,\K)) = \h_x$.
\end{proof}

In the last part of this section we establish a relation between the
characters of the indecomposable parity sheaves with coefficients in
$\F$, and the presence of torsion in the cohomology groups of the
stalks or costalks of intersection cohomology complexes over $\O$ or
$\Z$.

In the following, when we say that the stalk of a sheaf (= complex)
is torsion free or has torsion, we mean that the corresponding
statement is true for the direct sum of the cohomology groups of the
stalk.\footnote{For example, the statement that a sheaf $\sF$ has torsion free stalks
means that all cohomology groups of the stalks of $\sF$ are torsion
free, whereas the statement the stalk of $\sF$ at $x$ has torsion
means that there exists a cohomology group of the stalk of $\sF$ at
$x$ which has torsion.}

\begin{prop} \label{prop:IC} 
For any $w \in W$, the following are equivalent:
  \begin{enumerate}
  \item $\IC(w, \O)$ has torsion free stalks and costalks;
  \item $\IC_B(w, \O) \cong \sE(w, \O)$;
  \item $\IC_B(w, \F) \cong \sE(w, \F)$;
  \item $\ch(\sE(w,\O)) = \ch(\sE(w, \F)) = \h_w$.
  \end{enumerate}
\end{prop}

\begin{proof} \emph{$(1) \Rightarrow (2)$}: Because
  $\K$ is flat as an $\O$-module $\IC(w, \O) \otimes_{\O} \K$
  satisfies the conditions IC1) -- IC4) and hence we have an
  isomorphism
\begin{equation} \label{eq:modext}
\IC(w, \O) \otimes_{\O} \K \cong \IC(w, \K).
\end{equation}
Now assume that $\IC(w, \O)$ has
 torsion free stalks and costalks and let $? \in \{ !, * \}$. Then
 $\mathcal{H}^j(i_x^? \IC(w, \O))$ is a local system with torsion free
 stalks, and hence is isomorphic to a direct sum of trivial local
 systems, because $X_x$ is simply connected. Now $\mathcal{H}^j$
 commutes with the forgetful functor, and hence $\mathcal{H}^j(i_x^?
 \IC_B(w, \O))$ is isomorphic to a direct sum of the
 $B$-equivariant constant sheaves $\underline{\O}_{x}$ (the
 forgetful functor from $B$-equivariant local systems on $X_x$ to
 local systems is fully faithful because $B$ is connected).
For all $x \in W$ we have isomorphisms
\[
\mathcal{H}^j(i_x^? \IC_B(w, \O)) \otimes_{\O} \K \cong 
\mathcal{H}^j(i_x^? (\IC_B(w, \O)\otimes_{\O} \K ))\cong 
\mathcal{H}^j(i_x^? \IC_B(w, \K))
\]
(we use that $i_x^?(-)$ commutes with extension of scalars) 
and the last group vanishes if $j \ne \ell(w) \textrm{ mod } 2$ by Proposition
\ref{prop:parity=IC}. Hence
\[
\mathcal{H}^j(i_x^? \IC_B(w, \O)) = 0 \quad \text{for $j \ne \ell(w)
  \textrm{ mod } 2$.}
\]
By a standard decalage argument (see \cite[Section 2.2]{JMW2}) it
follows that $i_x^? \IC_B(w, \O)$
is isomorphic to a direct sum of even shifts of the $B$-equivariant
constant sheaf $\underline{\O}_{x}[\ell(w)]$. In particular, $\IC_B(w,
\O)$ is a parity sheaf. Now, intersection cohomology complexes are
indecomposable (for example, because $\underline{\O}_w$ is indecomposable and
$(i_w)_{!*}$ is fully-faithful \cite[Proposition
2.29]{decperv}). Hence $\IC_B(w,\O) \cong \sE(w, \O)$ by Theorem
\ref{thm-clas}.

\emph{$(2) \Rightarrow (1)$}: Suppose $\IC_B(w, \O) \cong \sE(w,
\O)$ and choose $? \in \{ !, *\}$. By definition of $\sE(w,\O)$, $i_x^?
\sE(w,\O)$ is isomorphic to a direct sum of shifted $B$-equivariant
local systems $\underline{\O}_x$. Hence $i_x^? \sFor( \IC_B(w, \O))
\cong 
i_x^? \IC(w, \O)$ is also isomorphic to a direct sum of shifted local
systems with torsion free stalks. Hence the stalks and costalks of
$\IC(w, \O)$ are torsion free.

\emph{$(1) + (2)\Rightarrow (3)$}:
If the stalks of the cohomology sheaves of $i_x^?\IC(w,\O)$ are
torsion free, then we have
\[
\mathcal{H}^j(i_x^?\IC(w, \O) \otimes_{\O}^L \F) \cong 
\mathcal{H}^j(i_x^?\IC(w, \O)) \otimes_{\O} \F.
\]
Hence, $\IC(w, \O) \otimes^L_\O \F \cong \IC(w, \F)$. Hence
\[
\IC_B(w, \F) \cong \IC_B(w, \O) \otimes^L_\O \F \cong
\sE(w, \O) \otimes^L_\O \F \cong \sE(w, \F)
\]
with the last isomorphism following by \eqref{eq:modreduc}.

\emph{$(3) \Rightarrow (4)$}: Define
\[
h := \ch(\sE(w, \F)) \in \He.
\]
Then $h$ is a self-dual element of the Hecke algebra (by Proposition
\ref{cor-dual}). Let us write
\[
h = \sum_{x \in W} a_x H_x
\]
for some $a_x \in \N[v,v^{-1}]$. If $\sE(w,\F) \cong \IC_B(w,\F)$ then
we can translate the defining properties of the
intersection cohomology complex given above: IC1) gives
$a_x = 0$ for $x \not < w$; IC2) gives $a_w = 1$; and IC3) gives $a_x
\in v^{-1} \N[v^{-1}]$  for $x < w$. Hence $h = \h_w$ by the
uniqueness of the Kazhdan-Lusztig basis.

\emph{$(4) \Rightarrow (2)$}: By definition, $\sE(w, \O)$ is
self-dual, supported on the closure $\overline{X_w}$ and, for $? \in
\{ !, * \}$, $i_x^? \sE(w, \O)$ is isomorphic to
$\underline{\O}_w[\dim X_w]$ if
$x = w$ and is isomorphic to a direct sum of shifts of
equivariant constant sheaves in general. Using that
$\underline{\O}_x[\ell(x)]$ is self-dual and $\mathbb{D} i_x^* \cong i_x^!
\D$ we see that $\sE(w, \O)$ satisfies the above conditions to be
the (equivariant) intersection cohomology complex if it satisfies IC3) which is the
statement that, for all $x < w$,
\[
\mathcal{H}^j(i_x^* \sE(w, \O)) = 0 \quad \text{for $j \ge -\ell(x)$}.
\]
This is the case if and only if, when we write $\ch(\sE(w,
\O)) = \sum_{x \in W} b_x H_x$ we have $b_x \in v^{-1}\N[v^{-1}]$ for
$x < w$. However, this is clearly satisfied if $\ch(\sE(w, \O)) = \h_w$.
\end{proof}

The above proposition has an obvious analogue on $G/P_s$ for $s \in
S$. (One can check that the above proof also applies in this situation.)

\begin{prop} \label{prop:ICs}
Suppose that $w \in W$ with $ws < w$. Then the following
  are equivalent:
  \begin{enumerate}
  \item $\IC(\overline{w}, \O)$ has torsion free stalks and costalks;
  \item $\IC_B(\overline{w}, \O) \cong \sE(\overline{w}, \O)$;
  \item $\IC_B(\overline{w}, \F) \cong \sE(\overline{w}, \F)$;
  \item $\ch(\sE(\overline{w},\O)) = \ch(\sE(\overline{w}, \F)) = \h_w$.
  \end{enumerate}
\end{prop}

We finish this section with a corollary of the above results which
justifies a number of statements made in the introduction.

\begin{cor} \label{cor:Ztorsion} The following are equivalent:
  \begin{enumerate}
  \item the stalks and
  costalks of $\IC(x, \Z)$ are free of $p$-torsion;
\item $\sE(w,\kk) \cong \IC_B(w,\kk)$;
\item $\ch(\sE(w,\kk)) \cong \h_w$.
  \end{enumerate}
\end{cor}

\begin{proof} Consider the statements:
\begin{enumerate}
\item[(2')] $\sE(w,\F) \cong \IC_B(w,\F)$;
\item[(3')] $\ch(\sE(w,\F)) \cong \h_w$.
\end{enumerate}
Clearly (3) $\Leftrightarrow$ (3') by Corollary
\ref{cor:parityextch}. We first show (1) $\Leftrightarrow$ (2') $\Leftrightarrow$ (3') and
then (2) $\Leftrightarrow$ (2').

Because $\O$ is a flat $\Z$-module one has an isomorphism
\[
\IC(w, \O) \cong \IC(w, \Z) \otimes_{\Z} \O.
\]
Hence the stalks or costalks of $\IC(w, \O)$ will have torsion if and
only if the stalks or costalks of $\IC(w, \Z)$ have $p$-torsion. Hence
we have that (1), (2') and (3') are equivalent by Proposition
\ref{prop:IC}.

We can check whether $\sE(w,\F)$ is isomorphic to $\IC_B(w, \F)$
(resp. $\sE(w, \kk)$ is isomorphic to $\IC_B(w, \kk)$) by verifying
conditions IC1), IC20, IC3) and IC4) given at the start of this
section. As $\F \subset \kk$ is flat, it is easy to see that
these conditions will be verified for $\sE(w, \F)$ if and only if they
are verified for $\sE(w,\kk)$. Hence (2) and (2') are equivalent.\end{proof}

% Given any flat morphism $\kk \to k'$ of rings one has
% \[
% \IC(w,\kk) \otimes^L_k k' \cong \IC(w,k').
% \]
% (One can check the conditions characterising $\IC(w,k')$ directly.)
% Hence, for any $\O$ as above
% \[
% \IC(w, \Z)\otimes_{\Z} \O \cong \IC(w, \O).
% \]
% Therefore the above proposition has the following corollary:

% \begin{cor}
%   Suppose that the stalks and costalks of of $\IC(w, \Z)$ are free of
%   $p$-torsion. Then $\ch(\sE(w), \F) = \h_w$. 
% \end{cor}

\section{Separated Elements} \label{sec-sep}

In this section $\kk$ denotes a field of charactheristic
$p$ and we fix a set of
representatives $\{ \sE(w) \; | \; w \in W \}$ for the
isomorphism classes of indecomposable parity sheaves on $G/B$ with
coefficients in $\kk$. We would like to investigate when their characters are equal to the
Kazhdan-Lusztig basis. By results of the previous section, the 
characters of indecomposable parity sheaves
yield a self-dual basis of $\mathcal{H}$ with certain positivity
properties which are shared by the Kazhdan-Lusztig basis. For example,
by the results of the previous section, if we express $\ch(\sE(x))\h_s$
and $\h_s\ch(\sE(x))$ in the basis $\{ \ch(\sE(w)) \;| \;w \in W \}$
then the coefficients belong to $\mathbb{N}[v,v^{-1}]$. In this 
section we investigate to what extent these properties already
determine the basis.\footnote{One can also prove that, if one expresses any
  product $\ch(\sE(x)) \ch(\sE(y))$ in the basis $\{ \ch(\sE(w)) \;|
  \;w \in W \}$ then the coefficients are Laurent polynomials with
  positive coefficients. We will not make use of this stronger positivity
  property below.
}

We start by introducing some notation. Given $a, a' \in \Z[v,v^{-1}]$
write
\[
a \le a' \Leftrightarrow a' - a \in \N[v,v^{-1}].
\]
Given $h, h' \in \He$ we expand them in the Kazhdan-Lusztig basis as $h
= \sum a_x \h_x$ and $h' = \sum a_x' \h_x$ and write
\[
h \le h' \Leftrightarrow a_x \le a'_x \text{ for all $x \in W$.}
\]
Using the fact that $0 \le \h_w\h_s$ and $0 \le \h_s\h_w$ for all $w \in W$ and $s
\in $ (which follows from \eqref{KL-mult-left} and  \eqref{KL-mult-right} in Section \ref{sec-hecke}) we have 
\begin{equation} \label{eq:hs}
h \le h' \Rightarrow h\h_s \le h'\h_s \text{ and } \h_sh \le \h_sh'.
\end{equation}

Given polynomials $ a^i = \sum_j
a^i_jv^j  \in \Z[v,v^{-1}]$ where $i$ runs over some finite indexing set $I$ define
\[
\min_{i \in I} \{ a^i \} = \sum_j (\min_{i \in I} \{ a^i_j \} )v^j.
\]
For example $\min \{ v^{-2} + 1 + 2v^2, 4v^{-2} + v^2 \} =
v^{-2} + v^2$. Similarly, if $h^i = \sum_{x \in W} a^i_x \h_x$ is a family of
elements of $\He$ indexed by $i \in I$ (again assumed finite) set
\[
\min_{i \in I} \{ h^i \} = \sum_{x \in W} \min_{i \in I} \{ a^i_x \} \h_x.
\]
Clearly if $a \in \Z[v,v^{-1}]$ satisfies $a \le a^i$ for all $i \in
I$ then $a \le \min_{i \in I} \{ a^i
\}$. Similarly, if $h \in \He$ satisfies $h \le h^i$ for all $i
\in I$ then $h
\le \min_{i \in I} \{ h^i \}$.

We now define elements $\bo_y \in \He$ for each $y \in W$. We define
these elements inductively as
follows:
\begin{enumerate}
\item $\bo_{id} = \h_{id} = H_{id}$;
\item Let $y \in W$ and assume that we have defined $\bo_x$ for all $x
  <  y$. Define
\[
\bo_y = \min \{ \min_{s \in \dL(y) } \{ \h_s \bo_{sy} \} ,  \min_{t \in
  \dR(y) } \{ \bo_{yt} \h_t  \} \}.
\]
\end{enumerate}
For example $\bo_s = \h_s$ for all $s \in S$ and $\bo_{st} = \h_{st}$
for any $s \ne t \in S$.

By induction, all the information needed to evaluate this formula is
encoded in the $W$-graph (using \eqref{KL-mult-left} and
\eqref{KL-mult-right}  in Section 
\ref{sec-hecke}). Another immediate consequence of this formula is that
if we define polynomials $D_{xy} \in \Z[v,v^{-1}]$ by
\[
\bo_y = \sum_x D_{xy} \h_x
\]
then $D_{xy} \in \N[v,v^{-1}]$,  $D_{xy} = 0$ unless $x \le y$ and $D_{yy} = 1$. In particular, 
$\{ \bo_y \; | \; y \in W \}$ is a basis of $\He$.

\begin{defi}
  Let $y \in W$, if $\bo_y = \h_y$ we say that $y$
  is \emph{separated}. We denote the set of separated elements in $W$
  by $\sigma(W)$.
\end{defi}

We will see in Proposition \ref{prop-sup} below that for any separated
$y \in W$ we have that $\ch(\sE(y)) = \h_y$ for all fields of
coefficients $\kk$. This is the content of Theorem \ref{thm-sep}
of the introduction.

\begin{ex} \label{ex-dih}
Let $W$ be a dihedral group:
\begin{displaymath}
D_n = \langle s, t \; | \; s^2 = t^2 = (st)^n = id \rangle.
\end{displaymath}
Then one may show that $\underline{B}_x = \h_x$ if and only if $x \in \{ id, s, t, st, ts, w_0 \}$. In
particular, $A_2$ and $A_1 \times A_1$ are the only rank two Weyl
groups in which $\sigma(W) = W$.
\end{ex}

With the above notions in hand, we now revisit the problem of
calculating the characters of indecomposable parity sheaves.

\begin{lem} \label{lem:le}
Let $\sF$ be a parity sheaf. Then for any direct summand
  $\sG$ of $\sF$ one has
  \[
\ch(\sG) \le \ch(\sF).
\]
\end{lem}

\begin{proof} This is immediate from the fact that, by Theorems
  \ref{thm-clas} and \ref{thm:Ncomb}, the
  characters of parity sheaves are $\N[v,v^{-1}]$-linear
  combinations of the Kazhdan-Lusztig basis $\{ \h_x \; | \; x \in W
  \}$.
\end{proof}

\begin{prop} \label{prop-sup} With coefficients $\kk$ in
any field we have
\begin{displaymath}
\ch(\sE(y)) \le \bo_y
\end{displaymath}
for all $y \in W$.

In particular, for all $y \in \sigma(W)$ we have $\ch(\sE(y)) =
\h_y$.
\end{prop}

\begin{proof} Clearly $\ch(\sE(id)) = \h_{id} = \bo_{id}$ and 
so we may assume by induction that the proposition is true for all $x
< y$. For all $s \in \dL(y)$ we know that $\sE(y)$ occurs as a
direct summand of $\ver_s \sE(sy)$. Now
\[
\ch(\sE(y)) \le \ch( \ver_s \sE(sy)) = \h_s \ch(\sE(sy)) \le \h_s\bo_{sy}.
\]
by Lemma \ref{lem:le}, Lemma \ref{lem-parchar} and \eqref{eq:hs} respectively.
Similarly, for all $t \in \dR(y)$,
$\sE(y)$ occurs as a direct summand of $\sE(yt) \ver_t$ and so
\[
\ch( \sE(y)) \le \bo_{yt} \h_t.
\]
Intersecting all of these conditions (see the remarks at the
beginning of this section) gives
\[
\ch(\sE(y)) \le \min \{ \min_{s \in \dL(y)} \{ \h_s \bo_{sy} \}, 
\min_{t \in \dR(y)} \{ \bo_{yt} \h_t \} \} = \bo_y.
\]
The last statement of the proposition is immediate: if $\bo_y = \h_y$ then
$0 \le \ch(\sE(y)) \le \bo_y$
forces $\ch(\sE(y)) = \h_y$ because $\ch(\sE(y))$ is non-zero. \end{proof}

\begin{remark} One may show that if $G$ is of rank 2 with Weyl
  group $W$ (a dihedral group), then the Schubert varieties corresponding
  to elements $w \in \sigma(W)$
  are smooth. Hence Proposition
  \ref{prop-sup} does not give any new information in this case.
\end{remark}

It remains to prove Theorem \ref{thm-dec} from the introduction. In
fact we prove a more precise result. Fix $w \in W$ and $s \in S$ with
$ws > w$. By \eqref{KL-mult-right} in Section \ref{sec-hecke} we can
write:
\begin{equation} \label{eq:multexpand}
  \h_w\h_s  = \h_{ws}  + \sum_{zs < z < w} \mu(z, w)\h_z.
\end{equation}
Define $A(w,s) = \{ ws \} \cup \{ z \in W \; | \; zs < z < w, \mu(z,w) \ne 0 \}$.

\begin{prop}
  Suppose that $w$ and each element of $A(w,s)$ belongs to
  $\sigma(W)$. Then one has an isomorphism
\[
\pi_{s*} \sE(w) \cong \sE(\overline{w}) \oplus \bigoplus_{zs < z < w}
\sE(\overline{z})^{\oplus \mu(z,w)}.
\]
\end{prop}

\begin{proof}
  By Theorem \ref{thm-clas} we have a decomposition
\[
\pi_{s*} \sE(w) \cong \bigoplus_{z \in W ; zs < z \atop \nu \in \Z}
\sE(\overline{z})[\nu]^{\oplus m_{z, \nu}}.
\]
for certain natural numbers $m_{z, \nu} \in \N$. Applying $\pi_s^*$
and using Proposition \ref{cor:Ps} we obtain
\[
\sE(w) \ver_s = \pi_s^* \pi_{s*} \sE(w)[1] \cong \bigoplus_{z \in W ;
  zs < z \atop \nu \in \Z}
\sE(z)[\nu]^{\oplus m_{z, \nu}}.
\]
By assumption, $w \in \sigma(W)$ and so $\ch(\sE(w)) = \h_w$. By Lemmas
\ref{lem:le} and \ref{lem-parchar} we have, for any $z$,
\[
\ch( \sE(z)[\nu]^{\oplus m_{z, \nu}}) \le \ch(\sE(w)\ver_s) = \h_w\h_s.
\]
In view of \eqref{eq:multexpand} we conclude that $m_{z, \nu} = 0$
unless $\nu =0$ and that $m_{z, \nu} \ne 0$ implies 
$z \in A(w,s)$. On the other hand, our assumptions guarantee that
$\ch(\sE(z)) = \h_z$ for all $z \in
A(w,s)$. Hence $m_{ws,0} = 1$ and $m_{z,0} = \mu(z,w)$ for $ws \ne z \in
A(w,s)$. The proposition then follows.
\end{proof}

If $w \in \sigma(W)$ then $\ch(\sE(w)) = \h_w$ and so
$\sE(w, \kk) \cong \IC_B(w, \kk)$ by Corollary \ref{cor:Ztorsion}.
Hence (under the
same assumptions as in the above proposition) we have an isomorphism
\[
\pi_{s*} \IC_B(w) \cong \IC_B(\overline{w}) \oplus \bigoplus_{zs < z < w}
\IC_B(\overline{z})^{\oplus \mu(z,w)}.
\]
Applying the forgetful functor yields
\[
\pi_{s*} \IC(w) \cong \IC(\overline{w}) \oplus \bigoplus_{zs < z < w}
\IC(\overline{z})^{\oplus \mu(z,w)}.
\]
On the other hand, if $ws < w$ then, by Proposition \ref{cor:Ps}, 
we have
\[
\pi_{s*} \sE(w) \cong \sE(\overline{w})[1] \oplus \sE(\overline{w})[-1].
\]
Now, if $w \in \sigma(W)$, then $\sE(w) \cong \IC_B(w)$ as above and
\[
\h_w = \ch(\sE(w)) = \ch(\pi_s^*\sE(\overline{w})[1]) = \ch(\sE(\overline{w}))
\]
by Propositions \ref{prop-sup} and \ref{cor:Ps} and Lemma
\ref{lem-parchar}. Proposition \ref{prop:ICs} then gives
$\sE(\overline{w}) \cong \IC_B(\overline{w})$. Hence
\[
\pi_* \IC(w) \cong \IC(\overline{w})[1] \oplus \IC(\overline{w})[-1].
\]
This proves Theorem \ref{thm-dec} from the introduction.

\section{Results of Computer Calculations} \label{sec-comp}

In this section we give some examples of the sets $\sigma(W) \subset
W$ and the basis $\{ \underline{B}_x \}$ for low rank Weyl groups. As
is clear from its definition,
the only information needed to calculate the basis $\{ \underline{B}_x \}$
is the Coxeter system $(W,S)$ and its $W$-graph described in Section \ref{sec-hecke}.
 However, no general description of
the $W$-graph is known  (for descriptions of some subgraphs see
\cite{LaSch} and \cite{Ke} and for a description of the computational
aspects of the problem see \cite{dCl2} and \cite{OK}).

Thus, in order to calculate the basis $\{ \underline{B}_x \}$ (and
hence $\sigma(W)$) we have to restrict ourselves to examples. This involves two steps:
\begin{enumerate}
\item calculation of the $W$-graph of $(W,S)$, and
\item calculation of the basis $\{ \underline{B}_x \}$ using the $W$-graph.
\end{enumerate}
Step 1) is computationally quite difficult, especially when the Weyl group is large. Luckily there exists the program \emph{Coxeter} written by Fokko du Cloux \cite{dCl1}, which calculates the $W$-graph very efficiently. Step 2) is then relatively straightforward. A crude implementation in Magma (whose routines for handling Coxeter groups proved very useful) as well as the $W$-graphs obtained from \emph{Coxeter} are available at:
\begin{center}
\url{http://people.maths.ox.ac.uk/williamsong/torsion/}
\end{center}
This site also contains a complete description of the basis
elements $\underline{B}_x$ for $x \in W$
and sets $\sigma(W)$ for all Weyl groups of ranks less than 6.

We will now describe examples of the sets $\sigma(W)$.

\subsection{$A_n, n \le 6$} Here $\sigma(W) = W$. Thus, all
intersection cohomology complexes with coefficients in an arbitrary
field have the same characters as in
characteristic zero and the decomposition theorem is true with field
coefficients of any characteristic. This is the statement of Theorem \ref{thm:a6}.

\subsection{$A_7$}\label{subsec:A7}
Let $W = A_7$ with Coxeter generators $s_i$ with $i \in \{ 1, \dots, 7
 \}$ corresponding to the simple transpositions $(i, i+1)$.  In $W$, 38 of the 40 320 elements do not belong to $\sigma(W)$. The elements which do not lie in $\sigma(W)$ break up naturally into five groups, which we now describe.
 
Consider the following elements of $W$:
\begin{equation*}
\begin{array}{c}
w_1 =  \begin{array}{c} 46718235 \\
 \includegraphics[width=2cm]{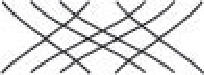} \end{array} ,
 \quad w_2 = 
 \begin{array}{c} 67823451 \\
 \includegraphics[width=2cm]{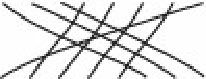} \end{array}, \quad
 w_3 = 
 \begin{array}{c} 84567123 \\
 \reflectbox{\includegraphics[width=2cm]{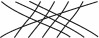}}
 \end{array}
 \\
w_4 = 
\begin{array}{c}
62845173 \\
 \includegraphics[width=2cm]{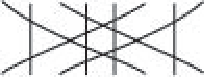} \end{array}
\; \text{ and } \; w_5 = 
\begin{array}{c}
84627351 \\
 \includegraphics[width=2cm]{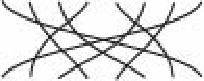} \end{array}.
 \end{array}
 \end{equation*}
 
The first group consists of
\begin{equation*}
K_1 = \{ uw_1v \; | \; u, v \in \langle s_4 \rangle \}.
\end{equation*}

% These are the first examples of ``hexagon permutations'' as introduced
% by Billy and Warrington \cite{BW}. See the appendix for a more
% detailed discussion of these permutations.

The second group consists of
\begin{equation*}
K_2 = \{ w_2, s_5w_2, s_1w_2, s_1s_5w_2, w_2s_3, w_2s_7, w_2s_3s_7 \}.
\end{equation*}
(note that $w_2$ is maximal in $K_2$). The third group $K_3$ is obtained from $K_2$ by inversion (or by applying the automorphism $s_i \mapsto s_{8-i}$). It contains $w_3$ as a maximal element. The fourth group consists of
\begin{equation*}
K_4 = \{ uw_4v \; | \; u, v \in \langle s_2, s_6 \rangle \}.
\end{equation*} The fifth group consists of
\begin{equation*}
K_5 = \{ uw_5v \; | u, v \in \langle s_4 \rangle \}.
\end{equation*}
It would be interesting to investigate the intersection cohomology complexes corresponding to the minimal elements in $K_i$ for $i \ne 1$ directly.

Note that the set $K_1$ has already arisen in Kazhdan-Lusztig
combinatorics; these are the so-called ``hexagon permutations'' of
Billey and Warrington (see \cite{BW}, the name refers
to a characteristic hexagon shape appearing in their heap 
representation).

The significance of these permutations is explained by the following result.
We say that a permutation $w\in S_n$ contains the pattern of a permutation
$y\in S_m$ if there is a collection of indices $1 \le i_1 <\dots < i_m \le n$
so that $w(i_1),\dots,w(i_m)$ are in the same relative order as 
$y(1),\dots,y(m)$.  Otherwise, we say that $w$ avoids the pattern $y$. 

\begin{thm}(\cite[Theorem 1]{BW}) Let $\bw$ be a reduced word for an element $w\in S_n$.
Then the corresponding Bott-Samelson resolution $\pi\colon \Xt_\bw \to
X_w$ (see the appendix) is a small map if and only if
the permutation $w$ avoids the pattern $321$ and the four hexagon 
permutations $w_1$, $s_4w_1$, $w_1s_4$ and $s_4w_1s_4$.
\end{thm}

For a more general notion of pattern avoidance which works for general
Coxeter groups, see \cite{BiP}.  A geometric interpretation of pattern avoidance is given in \cite{BiBr}.  

The Bott-Samelson resolutions of the hexagon permutations are semi-small,
but not small (as can be checked directly). In the appendix, Braden
treats the intersection cohomology
complex over $\Z$ corresponding to $w \in K_1$ in detail, and shows
that they have 2-torsion in their costalks.
In particular, $\ch(\sE(w,\kk)) \ne \h_w$ for $w \in K_1$ if $\kk$
is a field of characteristic 2 by Corollary \ref{cor:Ztorsion}.

\subsection{$B_2$ and $B_3$} As $B_2$ is dihedral, it has already
been covered in Example \ref{ex-dih} where we saw that, if $W = \langle s, t \; | \;
(st)^4 = 1 \rangle$ then $W \setminus \sigma(W) = \{ sts, tst \}$. One
calculates easily that
\[
\underline{B}_{sts} = \h_{sts} + \h_{s} \quad \text{and} \quad
\underline{B}_{tst} = \h_{tst} + \h_{t}.
\]
One may show that, if $s$ (resp. $t$) denotes the simple reflection
corresponding to the long (resp. short) simple root then the Schubert
variety corresponding to $sts$ is smooth and so $\sE(sts) \cong
\IC_B(sts)$ for any field of coefficients. Furthermore one may show (either by direct
calculation or using the results of \cite{JW}) that
$\sE(tst) \cong \IC_B(tst)$ if and only if the coefficients $\kk$ are
not of characteristic 2. It follows from Proposition \ref{prop-sup}
that, with coefficients of characteristic 2, one has
\[
\ch (\sE(tst) ) =  \h_{tst} + \h_{t}.
\]

Now consider $W = B_3$ with generators
\begin{displaymath}
\xymatrix{ s \ar@{-}[r] & t\ar@2{-}[r]  & v}.
\end{displaymath}
In this example, $\underline{B}_x$ is an
$\N$-linear combination of Kazhdan-Lusztig basis elements for all $x
\in W$ except $x = stsuts$, where one has
\[
\underline{B}_{stsuts} = \h_{stsuts} + (v+v^{-1})\h_{sts}.
\]
Of the 48 elements of $W$, 21 do
not lie in $\sigma(W)$. They are:
\begin{equation*}
\begin{array}{c} 
utu,
tut,
utsu,
tuts,
utsut,
tsuts,
sutu,
tsutu, \\
utsutu,
(tsu)^2,
tutsutu,
stut,
stsut,
stsuts,
(sut)^2,
sutsutu, \\
tsutsut,
tsutsutu,
stuts,
stutsutu,
stsutsut.
\end{array}
\end{equation*}
 
 \subsection{$B_4$ and $B_5$} In $B_4$, which contains 384 elements,
 221 elements do not lie in $\sigma(W)$. In $B_5$, which contains 3840
 elements, 2627 elements do not lie in $\sigma(W)$.

\subsection{$C_2$, $C_3$, $C_4$ and $C_5$} Note that this is covered
by the discussion of type $B$ above.

\subsection{$D_4$}\label{subsec:D4}
We label our generators $s$, $t$, $u$ and $v$ of $W$ as follows:
\begin{displaymath}
\xymatrix@R=0.15cm@C=0.15cm{
  &   & u\ar@{-}[dl] \\
s \ar@{-}[r]& t\\
  &   & v\ar@{-}[ul]}
\end{displaymath}
Here, 7 elements do not belong to
$\sigma(W)$. Let $\tau$ be the automorphism of $W$ mapping $s \mapsto
u \mapsto v \mapsto s$. The elements not in $\sigma(W)$ are $w_1 =
tvtsutv$, $\tau(w_1)$ and $\tau^2(w_1)$ as well as $w_2 = suvtvsu$,
$tw_2$, $w_2t$ and $tw_2t$. In the appendix, Braden discusses the case
of $w_2$ in more detail.

\subsection{$D_5$ and $D_6$} In $D_5$, 171 of the 1920 elements in $W$
do not lie in $\sigma(W)$. In $D_6$, which contains 23040 elements,
3713 elements of $W$ do not lie in $\sigma(W)$.

\subsection{$F_4$} In $F_4$, 949 of the 1152 elements do not lie in $\sigma(W)$.

\subsection{$G_2$} In this case we have already calculated $\sigma(W)$
in Example \ref{ex-dih}. Here we obtain nothing new. If $W = \langle
s, t | s^2 = t^2 = (st)^6 = 1 \rangle$ then $\sigma(W) = \{ 1, s, t,
st, ts, ststst \}$. However these Schubert varieties are smooth (this
is obvious for $X_{ststst} = G/B$ and for the others it follows
because the Bott-Samelson resolution is an isomorphism), and
so $\ch(\IC(w,\kk))= \ch(\sE(w,\kk)) = \h_w$ in any characteristic if $w
\in \sigma(W)$.

\subsection{Further calculations}

Let us briefly describe how the above algorithm may be taken further
with some of the geometric input contained in the appendix.

For example, if $W$ is of type $A_7$ then, with notation as in
\ref{subsec:A7}, the calculations in
the appendix allow one to conclude that $\ch(\sE(w_1)) = \h_{w_1}$
if $\Char \kk \ne 2$. Then the above algorithm may be used to deduce
that $\ch(\sE(w)) = \h_w$ for all hexagon permutations $w$.

Similarly in $D_4$ if one knows, with notation as in \ref{subsec:D4},
that $\ch(\sE(w_2)) = \h_{w_2}$
then it follows that $\sE(tw_2)$, $\sE(w_2t)$ and $\sE(tw_2t)$ all
have characters are given by Kazhdan-Lusztig basis elements. By the
calculations of the appendix this occurs if and only if $\Char \kk \ne 2$.
%It follows that Lusztig's conjecture is true
%around the Steinberg weight in type $D_4$.

One can sometimes turn these arguments around to deduce that $\ch(\sE(w))
\ne \h_w$ for all $w \in K$ for some subset $K \subset W$,  once one knows
that this is the case for one $w \in K$. For example,
if $W$ is of type $D_4$ and one knows that $\ch(\sE(w_2)) \ne
\h_{w_2}$ (as is the case in characteristic 2) one cannot have
$\ch(\sE(w)) = \h_{w}$ for $w \in \{ tw_2, w_2t, tw_2t \}$. Indeed,
let us assume, for example, that $\ch(\sE(tw_2)) = \h_{tw_2}$. Using the
$W$-graph one may calculate
\[
\h_s\h_{tw_2} = \h_{stsuv} + \h_{sutvtsu} + \h_{stsvtsu} +
\h_{stsutsv} + \h_{stsutvtsu}. 
\]
Now, $stw_2 = stsutvtsu$ is separated, and hence $\ch(\sE(stw_2)) =
\h_{stw_2}$. It follows that we have a decomposition
\[
\ver_s(\sE(tw_2)) = \sE(stw_2) \oplus \sE'
\]
with $\sE(w_2)$ occuring
as a direct summand of $\sE'$. It follows by Lemma \ref{lem:le} that
\[
\ch(\sE(w_2)) \le \h_{stsuv} + \h_{w_2} + \h_{stsvtsu} +
\h_{stsutsv}.
\]
However, by Proposition \ref{prop-sup},
\[
\ch(\sE(w_2) \le \underline{B}_{w_2} = \h_{w_2} + \h_{suv}.
\]
Hence $\ch(\sE(w_2)) = \h_{w_2}$ as well. Analogous arguments apply in
the other cases.

Similarly one may show in type $A_7$ that if
$\ch(\sE_w) \ne \h_w$ for some hexagon permutation $w$, then
$\ch(\sE_x) \ne \h_x$ for all hexagon permutations. A geometric
explanation for this is given in Remark \ref{rem:otherhex} of the
appendix.

\appendix
\section{Examples of torsion in $IH$ of Schubert varieties in types $A_7$ and $D_4$}

\centerline{ \emph{by Tom Braden}
\footnote{supported in part by National Science Foundation grant DMS-0201823
and NSA grant H98230-08-1-0097}
}
\hspace{1cm}

 Let $G$ be a connected reductive complex group, and fix a choice of a 
Borel subgroup $B$, maximal torus $T$, and opposite Borel subgroup $B^-$,
so $T = B \cap B^-$.  
Let $\Phi^+ \subset \Phi \subset X(T)$
be the corresponding sets of roots and positive roots, chosen so that
the weights of $\Ad T$ acting on $\mathfrak b$ are $-\Phi^+ \cup \{0\}$.
Let $W = N(T)/T$ denote the Weyl group and $S \subset W$ the set of simple 
reflections.  Let $\ell\colon W \to \N$ be the length function, and 
$\le$ the Bruhat-Chevalley order on $W$.

Consider the flag variety $X = G/B$; $G$ acts on $X$ by
left multiplication.  The set of $T$-fixed points is in bijection 
with $W$ by $w \mapsto \wti B/B$, 
where $\wti$ is any lift of $w$ to $G$.
%(for definiteness, let us fix a lift $\wti$ for each $w\in W$).
Using this bijection, we abuse notation and refer to points in $X^T$ and
elements of $W$ by the same symbols.

\smallskip The flag variety $X$ has two decompositions by Bruhat cells
and dual Bruhat cells $X = \coprod_{w\in W} X_w = \coprod_{w\in W} S_w$,
where $X_w = B\cdot w$ and $S_w = B^- \cdot w$.   The dual cell
$S_y$ is a normal slice at $y\in X_y$ 
to the stratification $\{X_w\}_{w \in W}$.  This is a consequence of
the following lemma, which we will also use in our analysis of Bott-Samelson
varieties.

Let $N$, $N^-$ be the unipotent parts of $B$, $B^-$.  Take $y \in W$, 
and define $N_y = yN^-y^{-1}$ and $N^+_y = N_y \cap N$.  
Then $N^+_y$ is a connected unipotent group of dimension $\ell(w)$.

\begin{lem} \label{equivariance lemma}
The map 
\[N_y^+ \times S_y \to X, \;\;(g, x) \mapsto g\cdot x \]
is an $N_y^+$-equivariant isomorphism onto a 
$T$-invariant Zariski neighborhood of $y$.  This isomorphism trivializes 
the stratification by Schubert cells $X_w$ in the sense that
all strata of the induced stratification on $N_y^+ \times S_y$
are of the form $N_y^+ \times S$ 
for some $S \subset S_y$.  The set $N_y^+ \times \{y\}$ is 
one such stratum, and it is mapped isomorphically onto $X_y$.
\end{lem}

\begin{proof} 
The map $\phi\colon N_y \to X$ sending $g$ to $g \cdot y$ is an isomorphism
onto a $T$-invariant neighborhood $U$ of $y$.  It identifies $N^+_y$ with
$X_y$ and $N^-_y$ with $S_y$, where we put 
$N^-_y = N_y \cap N^-$.  Since the multiplication map $N_y^+ \times N_y^- \to N_y$
is an isomorphism of varieties, we get isomorphisms
\[N_y^+ \times S_y \cong N_y^+ \times N_y^- \cong N_y \cong U.\]  

The $N_y^+$-equivariance is obvious, and the remaining statements about the
stratification follow easily from this.
\end{proof}

It is easy to describe the one-dimensional $T$-orbits in $X$.
The closure of a one-dimensional orbit is a closed irreducible 
$T$-invariant curve;  we will refer to such curves as ``$T$-curves'' 
for short.
For any positive root $\mu \in \Phi^+$ (simple or not), let $s_\mu\in W$ 
denote the corresponding reflection.  

\begin{prop} \label{flag 1d orbits}
For any $\mu \in \Phi^+$ and any $w \in W$, there is
a unique $T$-curve $C$ which
contains $w$ and $ws_\mu$, and all $T$-curves are
of this form.  The weight of the action of $T$ on the tangent
space $T_{w}C$ is 
$w(\mu)$.
\end{prop}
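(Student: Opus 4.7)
The plan is to reduce the proposition to a local computation of $T$-weights on the tangent space $T_w(G/B)$, combined with an explicit construction of each $T$-curve via a rank-one subgroup attached to a root.

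First I would compute the $T$-weights on $T_w(G/B) \cong \mathfrak g / \Ad(\wti) \mathfrak b$. Since $\mathfrak b$ has $T$-weights $-\Phi^+ \cup \{0\}$, the tangent space weights are exactly $\{w(\alpha) \mid \alpha \in \Phi^+\}$, each with multiplicity one. Thus $T_w(G/B)$ decomposes as a direct sum of one-dimensional $T$-weight spaces, one for each positive root transported by $w$.

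For existence, given $\mu \in \Phi^+$, I would set $\nu = w(\mu) \in \Phi$ and consider the rank-one subgroup $L_\nu \subset G$ generated by $T$ and the root subgroups $U_{\pm \nu}$, which is isomorphic to $\SL{2}$ or $\mathrm{PGL}(2)$. Since $w^{-1}(-\nu) = -\mu \in -\Phi^+$, one has $U_{-\nu} \subset \wti B \wti^{-1}$, so $T \cdot U_{-\nu}$ is a Borel of $L_\nu$ contained in the stabilizer of $wB/B$. Hence the orbit $L_\nu \cdot wB/B$ is a $\mathbb{P}^1$. Its two $T$-fixed points are $wB/B$ and $s_\nu \cdot wB/B$; using the identity $s_\nu w = w \cdot s_{w^{-1}(\nu)} = w s_\mu$ identifies the second fixed point as $ws_\mu$. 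Transporting the corresponding computation from the identity coset (where $L_\mu \cdot eB/B$ has tangent weight $\mu$ at the origin, because the Borel $\mathfrak b \cap \mathfrak l_\mu$ has weights $\{-\mu,0\}$), the tangent weight of $L_\nu \cdot wB/B$ at $w$ is $w(\mu)$.

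For uniqueness and the claim that every $T$-curve arises this way, suppose $C$ is any $T$-curve through $w$. Then $C$ is smooth at $w$ with a one-dimensional $T$-stable tangent line $T_wC \subset T_w(G/B)$, which must coincide with one of the weight spaces, say the weight-$w(\mu)$ line for a unique $\mu \in \Phi^+$. Since $G/B$ is smooth and the weights on $T_w(G/B)$ are pairwise distinct, Bia{\l}ynicki-Birula linearization provides a $T$-equivariant identification of a $T$-stable open neighborhood of $w$ with a neighborhood of the origin in $T_w(G/B)$; under this identification, the tangent direction determines the intersection of $C$ with the neighborhood, and since $C$ is irreducible this forces $C = L_{w(\mu)} \cdot wB/B$.

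The most delicate point is the uniqueness step: once $T$-linearization near $w$ is invoked, everything reduces to the elementary fact that a $T$-invariant line in a representation with distinct weights must be a weight line, but setting up that linearization cleanly is the part of the argument requiring the most care. The sign of the tangent weight (namely $w(\mu)$ rather than $-w(\mu)$) is pinned down by the stated convention that $\mathfrak b$ has $T$-weights $-\Phi^+ \cup \{0\}$.
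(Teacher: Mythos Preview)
The paper states this proposition without proof; it is quoted as a standard fact about flag varieties, and the paragraph following it merely reformulates the statement in terms of left reflections $s_{\mu'}w$ rather than proving anything. So there is no proof in the paper to compare against.

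Your argument is correct. A couple of small points worth tightening. First, the linearization you invoke is concrete here: the open cell $\wti N \wti^{-1}\cdot wB/B$ (with $N$ the unipotent radical of $B^-$) is a $T$-stable affine chart, and the exponential map identifies it $T$-equivariantly with its Lie algebra, whose weights are exactly $w(\Phi^+)$. Second, in the uniqueness step you reduce to ``a $T$-invariant line in a representation with distinct weights must be a weight line,'' but what you actually need is that a $T$-invariant irreducible \emph{curve} through $0$ is a weight line. This uses slightly more than distinctness: if a generic point $p$ of such a curve had nonzero components in two weight spaces with weights $\chi_1,\chi_2$, then $\dim \overline{T\cdot p}=1$ forces $\chi_1$ and $\chi_2$ to be rational multiples of one another, which is impossible for two roots in $w(\Phi^+)$ in a reduced root system. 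With that remark your uniqueness argument goes through cleanly.
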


Since $s_{w(\mu)} = ws_{\mu}w^{-1}$, the above formula for the tangent weight
can also be given, up to sign, by saying that if $C$ is the $T$-curve joining
$w$ and $s_{\mu'}w$ for some $\mu' \in \Phi^+$, then 
the $T$-weight of $T_wC$ is $\pm \mu'$.  The sign can then be specified by 
noting that the weight is in $\Phi^+$ if and only if $w \le s_{\mu'}w$.

\subsection*{The Bott-Samelson variety}
Let $\bal = (\alpha_1,\alpha_2,\dots,\alpha_l)$ be a 
sequence of simple roots, not necessarily distinct, and 
let $\bw = (s_1, s_2, \dots, s_l)$ be the corresponding sequence of simple
reflections: $s_i = s_{\alpha_i}$.  
Put $w = s_1s_2\cdots s_l$.  Then $\bw$ is 
a reduced word for $w$ if $\ell(w) = l$.

For each simple reflection $s_i$, let $P_i$ be the corresponding 
minimal parabolic containing $B$, whose Lie algebra is 
$\mathfrak{b} \oplus \mathfrak{g}_{\alpha_i}$.
The Bott-Samelson variety $\Xt_\bw$ associated to $\bw$ is defined 
to be the quotient
\[(P_1 \times P_2 \times \dots \times P_l)/B^l,\]
where $B^l = B \times \dots \times B$ acts on 
$P_1 \times P_2 \times \dots \times P_l$ on the right by
\[(x_1, x_2, \dots, x_l)\cdot (b_1, b_2, \dots, b_l) = 
(x_1b_1, b_1^{-1}x_2b_2, \dots, b_{l-1}^{-1}x_lb_l).\]
Let $[x_1,\dots, x_l]$ denote the point of $\Xt_\bw$ corresponding
to $(x_1,\dots, x_l) \in P_1 \times \dots \times P_l$.

Let $B$ act on $\Xt_\bw$ by \[b\cdot[x_1,\dots,x_l] = [bx_1,x_2,\dots,x_l].\]
For any $1 \le k \le l$, define a $B$-equivariant map $\pi_k\colon \Xt_\bw \to X$ by
$\pi_k([x_1,\dots, x_l]) = x_1\cdots x_kB$.  Let $\pi = \pi_l$.

We can filter $\Xt_\bw$ by smaller Bott-Samelson varieties as follows.
For any subset $I \subset \{1, \dots l\}$, the $B$-invariant subvariety
\[\{[x_1, \dots, x_l] \in \Xt \mid x_i = 1\; \text{if $i \notin I$}\}\]
is clearly isomorphic to $\Xt_{\bw_I}$, where the
word $\bw_I$ is given by the simple reflections $s_i,\; i\in I$ 
taken in order of increasing $i$.  

\begin{remark} There is a decomposition of $\Xt_\bw$ into 
$2^l$ cells so that all of the subvarieties $\Xt_{\bw_I}$
are closures of cells.  However, this decomposition is 
not compatible with the projection $\pi$: the inverse 
image $\pi^{-1}(S_w)$ is not generally a union of cells.
It will be more useful for us to
consider a different paving of $\Xt_\bw$ by Bia\l ynicki-Birula cells below.
\end{remark}

The following easy result will be useful when we analyze the fibers of
the Bott-Samelson map.

\begin{lem} \label{transverse} For any $y \in Y$ the inverse image $\pi^{-1}(S_y)$
of the dual cell through $y$ 
is smooth.  In addition, if $I \subset \{1,\dots,l\}$, then
$\Sigma_{\bw_I}$ intersects $\pi^{-1}(S_y)$ transversely.
\end{lem}
\begin{proof} Lemma \ref{equivariance lemma} implies that
the action map $N^+_y \times \pi^{-1}(S_y) \to \Sigma_\bw$ is
an isomorphism onto an open subset of $\Sigma_\bw$.  It
follows that $\pi^{-1}(S_y)$ is smooth and meets any
smooth $N^+_y$-invariant subvariety transversely.
\end{proof}

\subsection*{Fixed points} We next describe the set $(\Xt_\bw)^T$ of $T$-fixed points.  
Let $D_l = (\Z/2\Z)^l = \{0,1\}^l$.  For each
$\vep = (\vep(1),\dots,\vep(l)) \in D_l$, define 
$p(\vep) = p_\bw(\vep)\in \Xt_\bw$ and $\bw^\vep \in W$ by
\[p(\vep) = [\sti_1^{\vep(1)},\dots,\sti_l^{\vep(l)}],
\; \bw^\vep = s_1^{\vep(1)}\cdots s_l^{\vep(l)}.\]
For any $\vep \in D_l$ and any 
$1 \le k \le l$, define $\vep[k] = (\vep(1),\dots,\vep(k),0\dots,0) \in D_l$.
As usual, we will refer to elements of $W$ and points
of $X^T$ by the same symbols. 

\begin{prop} \label{BS fixed points}
The map $\vep \mapsto p(\vep)$ is a bijection
between $D_l$ and $(\Xt_\bw)^T$. For any $\vep \in D_l$, we have
$\pi_k(p(\vep)) = \bw^{\vep[k]}$, and, in particular, 
$\pi(p(\vep)) = \bw^\vep$.
\end{prop}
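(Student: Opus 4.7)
My plan is to split the statement into three parts and handle them in order: (i) the formula $\pi_k(p(\vep)) = \bw^{\vep[k]}$; (ii) $T$-invariance of each $p(\vep)$ together with injectivity of $\vep \mapsto p(\vep)$; and (iii) surjectivity onto $(\Xt_\bw)^T$. Parts (i) and (ii) are essentially formal; the real content is in (iii).

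For (i), the definition of $\pi_k$ immediately gives $\pi_k(p(\vep)) = \sti_1^{\vep(1)}\cdots\sti_k^{\vep(k)}B$, which corresponds under the identification $X^T \leftrightarrow W$ to $s_1^{\vep(1)}\cdots s_k^{\vep(k)} = \bw^{\vep[k]}$. For $T$-invariance of $p(\vep)$, given $t \in T$ I would absorb the leading $t$ in $t\cdot p(\vep) = [t\sti_1^{\vep(1)},\sti_2^{\vep(2)},\dots,\sti_l^{\vep(l)}]$ into the right $B^l$-action by choosing a tuple $(b_1,\dots,b_l)$ lying entirely in $T^l \subset B^l$; this is possible because each $\sti_i$ normalises $T$, so iterated conjugation of $t$ by $\sti_1^{\vep(1)},\dots,\sti_i^{\vep(i)}$ stays inside $T$. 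Injectivity then follows immediately from (i): if $p(\vep) = p(\vep')$ then $\bw^{\vep[k]} = \bw^{\vep'[k]}$ for all $k$, and the relation $\bw^{\vep[k]} = \bw^{\vep[k-1]} s_k^{\vep(k)}$ lets one read off $\vep(k) = \vep'(k)$ by induction on $k$.

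The hard part will be (iii). My plan here is induction on $l$, using the $T$-equivariant projection $q\colon \Xt_\bw \to P_1/B \cong \PP^1$ obtained by forgetting all but the first coordinate. The base $P_1/B$ has exactly two $T$-fixed points, $eB$ and $s_1B$, corresponding to the choices $\vep(1) = 0$ and $\vep(1) = 1$. Each fibre of $q$ can be identified (after using the $B^l$-action to normalise the first coordinate to $e$ or $\sti_1$) with the smaller Bott-Samelson variety $\Xt_{(s_2,\dots,s_l)}$. The main technical point, and the step I expect to need the most care with, is that the $T$-action on the fibre over $s_1B$ inherited from $\Xt_\bw$ differs from the standard action on $\Xt_{(s_2,\dots,s_l)}$ by the inner automorphism of $T$ induced by conjugation by $\sti_1$; this twist preserves $T$ setwise and so does not change the set of $T$-fixed points of the fibre. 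Granted this, the inductive hypothesis produces $2^{l-1}$ fixed points in each of the two fibres, and these assemble to precisely the $2^l$ points $p(\vep)$ with $\vep \in D_l$, completing the bijection.
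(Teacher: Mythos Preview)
The paper does not actually supply a proof of this proposition: it is stated as a known fact about Bott-Samelson varieties and the text moves directly to Example~\ref{SL3 example}. So there is no ``paper's own proof'' to compare against.

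Your argument is correct and is essentially the standard one. Part~(i) is immediate from the definitions, and your treatment of $T$-invariance and injectivity in part~(ii) is clean: the key observation is that lifts $\sti_i$ lie in $N(T)$, so conjugation by them preserves $T$, which is exactly what is needed to absorb the leading $t$ into the $B^l$-action. For part~(iii), the induction on $l$ via the $T$-equivariant projection $q\colon \Xt_\bw \to P_1/B$ is the natural approach; you have correctly identified the one nontrivial point, namely that the inherited $T$-action on the fibre over $\sti_1 B$ is the standard action on $\Xt_{(s_2,\dots,s_l)}$ precomposed with the automorphism $t \mapsto \sti_1^{-1} t \sti_1$ of $T$, and hence has the same fixed-point set. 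With that in hand, the $2^{l-1}$ fixed points in each of the two fibres assemble to the $2^l$ points $p(\vep)$, giving surjectivity. Nothing is missing.
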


\begin{ex} \label{SL3 example}  Let $G = GL(3, \C)$.  There are two simple roots, 
call them $\rho_1$ and $\rho_2$.

Let $\bw = (s_{\rho^{}_1},s_{\rho^{}_2},s_{\rho^{}_1},s_{\rho^{}_2},s_{\rho^{}_1})$.  Let 
$w_0 = s_{\rho^{}_1}s_{\rho^{}_2}s_{\rho^{}_1}$ denote the longest
element in $W$.  Then there are five $T$-fixed points in $\pi^{-1}(w_0)$,
namely $p(\vep)$, where 
\[\vep\in\{11100, 01110, 00111, 10011, 11001\}.\]
Let us denote these five elements of $D_5$ 
by $\vep_1,\dots, \vep_5$.  
There exists a $T$-curve containing
%one-dimensional orbit joining 
$p(\vep_i)$ and 
$p(\vep_j)$ if and only if $i = j \pm 1 \mod 5$.  
For $i = 1, \dots, 5$, the tangent
weight at $p(\vep_i)$ of the $T$-curve 
joining $p(\vep_i)$ to
$p(\vep_{i+1})$ is the $i$th element of the list 
$-\rho_1$, $-\rho_2$, $\rho_1$, $\rho_2+\rho_1$, and $\rho_2$.

The composition of  $\pi_2\colon \Sig_\bw \to X$ with the projection 
$X \to G/P_1 \cong \C\PP^2$ restricts to a birational map
$\pi^{-1}(w_0) \to \C\PP^2$ which identifies $\pi^{-1}(w_0)$
with the blow-up of $\C\PP^2$ at two points. 
The exceptional fibers are the $T$-curves joining $p(\vep_1)$ to 
$p(\vep_5)$ and $p(\vep_3)$ to $p(\vep_4)$. 

\end{ex}

\subsection*{One-dimensional orbits} The one-dimensional $T$-orbits of $\Xt_\bw$ are more
difficult to classify than the fixed points.  Unlike the flag variety $G/B$, 
Bott-Samelson varieties generally have infinitely many 
$T$-curves.  We will describe a
collection of $T$-curves which span the tangent space at
each fixed point, but there are in general many other $T$-curves. 

Denote the standard basis of $D_l$ by
$\delta_i$, where $\delta_i(j) = \delta_{ij}$ is the Kronecker 
$\delta$-function.  For any $\vep\in D_l$ and $1\le i\le k$,
we have a $T$-curve joining $p(\vep)$ and $p(\vep +\delta_i)$,
namely
\[\{[\sti^{\vep(1)}_1, \dots, \sti^{\vep(i-1)}_{i-1},x, \sti^{\vep(i+1)}_{i+1}, \dots,\sti^{\vep(l)}_{l} ] \mid x \in P_i\}.\]  
This curve 
projects under $\pi$ to the $T$-curve in $G/B$ which 
joins $\bw^\vep$ and $\bw^{\vep+\delta_i}$, and so the
tangent weight of this curve at
$p(\vep)$ is $\pm\bw^{\vep[i-1]}(\alpha_i)$.  Note that 
$T$-curves which project down to fixed points,
such as the ones in Example \ref{SL3 example},
are not of this type.

\subsection*{Bia\l ynicki-Birula cells}  
Besides their definition as $B$-orbits, 
the Bruhat cells $\{X_w\}$ in the flag variety $X$ can 
also be described as Bia\l ynicki-Birula cells for 
the action of a strictly dominant cocharacter $\zeta\colon \C^*\to T$.  
For any $w \in W$, we have
\[X_w = B\cdot w = \{x \in X\mid \lim_{t\to \infty} \zeta(t)\cdot x = w\}.\]

The Bott-Samelson variety $\Xt_\bw$ will not in general have finitely many
$B$-orbits, but we can still consider its Bia\l ynicki-Birula cells (for the 
same cocharacter $\zeta$).  Given $\vep\in D_l$, we define
\[\Xt_{\bw,\vep} = \{x \in \Xt_\bw\mid \lim_{t\to \infty} 
\zeta(t)\cdot x = p(\vep)\}.\]

\begin{thm}[\cite{G1,Hae}] \label{BB cells}
The dimension of the Bia\l ynicki-Birula cell $\Xt_{\bw,\vep}$ is 
\[
\begin{split}
\dim_\C \Xt_{\bw,\vep} = 
\#\{1 \le k \le l \mid \bw^{\vep[k]}(\alpha_k) \in -\Phi^+\}\\
= \#\{1 \le k \le l \mid \ell(\bw^{\vep[k]}) > \ell(\bw^{\vep[k]}s_k)\}.
\end{split}\]
The fibers of the map $\Xt_{\bw,\vep} \to X_w$,  $w = {\bw^\vep}$ are affine spaces of dimension
\[
\begin{split}
\dim_\C \Xt_{\bw,\vep}  \cap \pi^{-1}(w) = 
\#\{1 \le k \le l \mid \bw^{\vep[k-1]}(\alpha_k) \in -\Phi^+\}\\
= \#\{1 \le k \le l \mid 
\ell(\bw^{\vep[k-1]}) > \ell(\bw^{\vep[k-1]}s_k)\}.
\end{split}\]
The cells $\pi^{-1}(w) \cap \Xt_{\bw,\vep}$ for all $\vep$ with $w = \bw^\vep$ 
give a paving by affines of $\pi^{-1}(w)$.
\end{thm}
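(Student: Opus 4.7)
The plan is to induct on $l$ using the tower of $\PP^1$-bundles underlying the Bott-Samelson variety. Writing $\bw' = (s_1,\dots,s_{l-1})$, the map $p_l : \Xt_\bw \to \Xt_{\bw'}$ that forgets the last coordinate is a $B$-equivariant $\PP^1$-fibration, and $\Xt_\bw$ is built up from a point by $l$ such stages. The base case $l=0$ is a point and trivial.

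First I would analyze the BB cells fiberwise on $p_l$. Over a $T$-fixed point $p(\vep') \in \Xt_{\bw'}$ the fiber is a $\PP^1$ with two $T$-fixed points $p(\vep', 0)$ and $p(\vep', 1)$. By the $T$-curve description given just before the theorem, the tangent weights at these two points along the fiber are negatives of each other; writing $\vep = (\vep',\vep(l))$, the tangent weight at $p(\vep)$ in the fiber direction equals $\bw^{\vep[l]}(\alpha_l)$ (one checks this identification from the $\vep(l)=0$ case using the relation $\bw^{\vep[k]}(\alpha_k) = -\bw^{\vep[k-1]}(\alpha_k)$ when $\vep(k)=1$). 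Since $\zeta$ is dominant, a $T$-weight $\mu$ corresponds to an attracting direction for $\zeta(t)$ as $t\to\infty$ iff $\langle \mu,\zeta\rangle < 0$ iff $\mu \in -\Phi^+$. Hence $p(\vep)$ attracts the generic point of its fiber exactly when $\bw^{\vep[l]}(\alpha_l) \in -\Phi^+$; in that case $\Xt_{\bw,\vep}$ is an $\mathbb{A}^1$-bundle over $\Xt_{\bw',\vep[l-1]}$, and otherwise $p_l$ restricts to an isomorphism $\Xt_{\bw,\vep} \stackrel{\sim}{\to} \Xt_{\bw',\vep[l-1]}$. Induction then gives $\dim_\C \Xt_{\bw,\vep} = \#\{k : \bw^{\vep[k]}(\alpha_k) \in -\Phi^+\}$, and the equivalence with the length version is the classical fact that $\ell(vs) < \ell(v) \Leftrightarrow v(\alpha) \in -\Phi^+$ for any simple reflection $s = s_\alpha$.

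For the fiber dimension and the paving, the key observation is that $\pi$ is $T$-equivariant and $w \in X^T$, so $\pi^{-1}(w)$ is $\zeta$-stable, and the intersections $\Xt_{\bw,\vep} \cap \pi^{-1}(w)$ for $\bw^\vep = w$ are the restrictions of BB cells and therefore pave $\pi^{-1}(w)$ by affines. To identify their dimensions I would show by the same induction that $\pi|_{\Xt_{\bw,\vep}} : \Xt_{\bw,\vep} \to X_w$ is a linear fibration: at each stage $k$ at which the cell acquires a new $\mathbb{A}^1$-factor (i.e.\ $\bw^{\vep[k]}(\alpha_k) \in -\Phi^+$), this factor either moves the image along the Bruhat cell in $X$ or stays inside a single fiber of $\pi$. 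Unwinding which is which via the length characterization, the $\mathbb{A}^1$ is "horizontal" (maps nontrivially into $X_w$) exactly when $\vep(k) = 1$ and $\bw^{\vep[k-1]}(\alpha_k) \in \Phi^+$, and "vertical" (contained in $\pi^{-1}(w)$) exactly when $\bw^{\vep[k-1]}(\alpha_k) \in -\Phi^+$. The vertical contributions give the stated formula $\dim \Xt_{\bw,\vep} \cap \pi^{-1}(w) = \#\{k : \bw^{\vep[k-1]}(\alpha_k) \in -\Phi^+\}$; as a sanity check, the difference of the two counts simplifies to $\ell(\bw^\vep) = \dim X_w$, since terms with $\vep(k)=0$ cancel and each $\vep(k)=1$ contributes $\pm 1$ according to whether right-multiplication by $s_k$ lengthens or shortens $\bw^{\vep[k-1]}$.

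The main obstacle is not conceptual but bookkeeping: one must simultaneously track the attracting structure on $\Xt_\bw$ and the Bruhat filtration on $X$ carefully enough to split each new $\mathbb{A}^1$-factor into a horizontal or vertical contribution, which requires comparing the signs of $\bw^{\vep[k-1]}(\alpha_k)$ and $\bw^{\vep[k]}(\alpha_k)$. Once this accounting is in place, both dimension formulas and the affine paving of $\pi^{-1}(w)$ follow from the same induction on $l$.
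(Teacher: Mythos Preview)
The paper does not prove this theorem; it is quoted from \cite{G1} and \cite{Hae}, so there is no in-paper argument to compare against. Your induction through the $\PP^1$-tower correctly establishes $\dim_\C \Xt_{\bw,\vep}$, and your identification of the fibrewise tangent weight as $\bw^{\vep[l]}(\alpha_l)$ is the right one.

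The argument for the fiber dimension and the linear fibration, however, has a genuine gap. Your ``horizontal/vertical'' dichotomy for the $\mathbb{A}^1$-factor acquired at stage $k$ is not well-defined with respect to $\pi=\pi_l$: whether varying the $k$-th coordinate moves the $\pi_l$-image depends on the later coordinates $x_{k+1},\dots,x_l$, not only on data visible at stage $k$. Take $\bw=(s,s)$ for a simple reflection $s=s_\alpha$ and $\vep=(1,1)$, so $\bw^\vep=e$. The unique $\mathbb{A}^1$-factor occurs at $k=1$, and your rule ($\vep(1)=1$, $\bw^{\vep[0]}(\alpha)=\alpha\in\Phi^+$) declares it horizontal. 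But the cell is $\{[n\sti,\sti] : n\in U_{-\alpha}\}$ (where $U_{-\alpha}\subset B$ is the root subgroup), and $\pi([n\sti,\sti])=n\sti^2 B=eB$: the entire cell sits in $\pi^{-1}(e)$ and is vertical. Your formula $\#\{k:\bw^{\vep[k-1]}(\alpha_k)\in-\Phi^+\}$ does return the correct value $1$ here, but only because it counts $k=2$, a stage at which no $\mathbb{A}^1$-factor was added --- so the geometric justification behind the count has collapsed. The sanity check you quote is a purely combinatorial identity relating the two counting formulas and cannot detect this.

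One route that does work: each cell $\Xt_{\bw,\vep}$ is in fact $B$-stable, because conjugation by $\zeta(t)$ contracts the unipotent radical $N\subset B$ to the identity as $t\to\infty$; hence $\pi|_{\Xt_{\bw,\vep}}$ is $B$-equivariant onto the single orbit $X_{\bw^\vep}$ and is automatically a locally trivial fibration, which forces the fiber dimension from your first formula together with $\dim X_w=\ell(w)$. Showing that the fiber is itself an affine space (so that one really obtains an affine paving of $\pi^{-1}(w)$) still requires an explicit parametrisation of the cells in root-subgroup coordinates, as carried out in \cite{G1}; the stage-by-stage accounting you propose does not suffice for this.
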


As a corollary we obtain the following relation
between the cells $\Sigma_{\bw,\vep}$ and the sub-Bott-Samelson
varieties $\Sigma_{\bw_I}$.  Take $\vep \in D_l$, and a subset $I \subset \{1, \dots, l\}$ 
and assume that $\vep(i) = 1$ implies $i \in I$ for all $i$, so 
the fixed point $p(\vep)$ lies in the subvariety $\Xt_{\bw_I} \subset \Xt_\bw$.

\begin{cor} \label{Containment of cells} The cell $\Sigma_{\bw,\vep}$ is contained in $\Xt_{\bw_I} \subset \Xt_\bw$
if and only if 
\[\ell(\bw^{\vep[i-1]}) < \ell(\bw^{\vep[i-1]}s_i) \; \text{for all $i \notin I$}.\] 
\end{cor}
\begin{proof} Using Theorem \ref{BB cells}, compare the dimension of 
$\Sigma_{\bw,\vep}$ with the dimension of the Bia\l ynicki-Birula cell
in $\Xt_{\bw_I}$ containing $p(\vep)$.
\end{proof}

\subsection*{An obstruction to splitting the Bott-Samelson sheaf} 
Let $\kk$ be a field or PID. 
Fix a word $\bw$ as above, and let $A_{\bw,\kk} = \pi_*\kk_{\Xt_\bw}$
be the pushforward to $X$ of the constant sheaf with coefficients in $\kk$.
We say that \emph{the decomposition theorem holds} for
$A_{\bw,\kk}$ if it is isomorphic to a direct sum of
shifted intersection cohomology sheaves $\IC(\overline{X_y};\kk)[s]$.

Let $S^\circ_y = S_y \setminus \{y\}$.  Consider the
natural homomorphism
\[\phi_{y,\bw,\kk}\colon H^\udot(\pi^{-1}(S_y),\pi^{-1}(S^\circ_y);\kk) 
\to H^\udot(\pi^{-1}(S_y);\kk)\]
of cohomology groups.  Because $\pi$ is proper, this is the same as 
the map obtained by applying hypercohomology to the adjunction morphism
$(i_y)_!i_y^!(A_{\bw,\kk}|_{S_y}) \to A_{\bw,\kk}|_{S_y}$, where 
%$A_S$ is the restriction of $A_{\bw,\kk}$ to $S_y$ and 
$i_y \colon \{y\} \to S_y$ is the inclusion.  
Applying $i_y^*$ to the adjunction gives a map 
$i_y^!(A_{\bw,\kk}|_{S_y}) \to i_y^*(A_{\bw,\kk}|_{S_y}) = A_{\bw,\kk}|_y$
whose hypercohomology also computes $\phi_{y,\bw,\kk}$,
since there is a cocharacter of $T$ which contracts $S_y$
onto $y$ (see \cite{SprPurity}).

\begin{prop} \label{univ coeff} Both the source and target of the homomorphism $\phi_{y,\bw,\Z}$
are free $\Z$-modules, and they vanish in odd degrees.  Consequently, we have $\phi_{y,\bw,\kk} = \phi_{y,\bw,\Z} \otimes_\Z \kk$.
\end{prop}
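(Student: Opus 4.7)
The plan is to exploit the Bialynicki-Birula (BB) decompositions of the Bott-Samelson variety to produce affine pavings of both $\pi^{-1}(S_y)$ and the central fibre $\pi^{-1}(y)$, and then leverage a Thom-type isomorphism to handle the relative cohomology; the coefficient change statement follows from universal coefficients applied to a map between free $\Z$-modules.

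First I would apply the dual form of Theorem \ref{BB cells}, using the anti-dominant cocharacter $-\zeta$ (equivalently, the ``source'' rather than ``sink'' contraction of $\zeta$), to obtain a paving of $\Xt_\bw$ by affine cells $\{\Xt'_{\bw,\vep}\}_{\vep \in D_l}$ with $\Xt'_{\bw,\vep}$ containing the $T$-fixed point $p(\vep)$. The $T$-equivariance of $\pi$ guarantees that each such cell maps into the dual Bruhat cell $S_{\bw^\vep}$ as a trivial linear fibration, so
\[
\pi^{-1}(S_y) = \bigsqcup_{\vep\,:\,\bw^\vep=y} \Xt'_{\bw,\vep},
\qquad
\pi^{-1}(y) = \bigsqcup_{\vep\,:\,\bw^\vep=y} \big(\Xt'_{\bw,\vep} \cap \pi^{-1}(y)\big),
\]
are affine pavings, the latter obtained by intersecting each cell with its vertical fibre over $y$. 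The standard spectral sequence / cellular argument for affine pavings then forces $H^*(\pi^{-1}(S_y);\Z)$ and $H^*(\pi^{-1}(y);\Z)$ to be free $\Z$-modules concentrated in even degrees, with one generator in degree $2\dim_\C C$ per cell $C$.

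Second, for the relative group $H^*(\pi^{-1}(S_y),\pi^{-1}(S_y^\circ);\Z)$, I would argue that the pair $(\pi^{-1}(S_y),\pi^{-1}(S_y^\circ))$ is, up to homotopy, a Thom pair for a rank $d = \dim_\C S_y = \ell(w_0) - \ell(y)$ complex ``normal'' object over $\pi^{-1}(y)$. Restricted to a single BB cell $\Xt'_{\bw,\vep}$, the trivial fibration over $S_y$ exhibits the cell pair as $(\C^{k},\C^{k} \setminus \C^{k-d})$, whose relative cohomology is free of rank one in degree $2d$. Filtering $\pi^{-1}(S_y)$ by closed unions of BB cells of bounded dimension and running the long exact sequence of the pair at each layer, the even-degree concentration of $H^*(\pi^{-1}(y);\Z)$ guarantees that all connecting maps vanish. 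One deduces a Thom-type isomorphism
\[
H^*(\pi^{-1}(S_y),\pi^{-1}(S_y^\circ);\Z) \;\cong\; H^{*-2d}(\pi^{-1}(y);\Z),
\]
so this relative cohomology is free as well. Finally, with both source and target of $\phi_{y,\bw,\Z}$ free and finitely generated, universal coefficients gives $H^*(-;\kk) \cong H^*(-;\Z)\otimes_\Z \kk$ for both groups (the $\Tor$ term vanishes), and naturality of the adjunction morphism $(i_y)_! i_y^! \to \mathrm{id}$ with respect to coefficient change delivers $\phi_{y,\bw,\kk} = \phi_{y,\bw,\Z}\otimes_\Z \kk$.

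The main obstacle lies in making the Thom-isomorphism step of the middle paragraph rigorous: $\pi^{-1}(S_y)$ is typically singular along $\pi^{-1}(y)$, so there is no literal normal vector bundle. One must work cell-by-cell, using the trivial affine fibration structure of each $\Xt'_{\bw,\vep}$ over $S_y$ provided by (the dual of) Theorem \ref{BB cells}, and patch the Thom classes along the BB filtration. The even-degree parity of all the cohomology groups appearing is what makes the inductive step succeed, so the affine paving of Step one is doing the real work here.
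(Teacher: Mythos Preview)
Your approach to the target is fine and essentially agrees with the paper: both use the affine paving of the fibre $\pi^{-1}(y)$ coming from Theorem~\ref{BB cells}. The paper reaches it by first contracting $\pi^{-1}(S_y)$ onto $\pi^{-1}(y)$ via a cocharacter, while you pave $\pi^{-1}(S_y)$ directly with the opposite BB cells; either route works.

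For the source, however, your argument has a genuine gap. Your claimed Thom-type isomorphism
\[
H^{*}(\pi^{-1}(S_y),\pi^{-1}(S^\circ_y);\Z)\;\cong\;H^{*-2d}(\pi^{-1}(y);\Z)
\]
cannot hold in general. As you yourself note, on each BB cell the local pair $(\C^{k},\C^{k}\setminus\C^{k-d})$ contributes relative cohomology $\Z$ concentrated in the single degree $2d$; any filtration or spectral-sequence argument assembled from these pieces therefore forces the global relative cohomology to sit entirely in degree $2d$. But it does not: the correct answer is spread over many degrees (its rank in degree $k$ counts cells $C_i\subset\pi^{-1}(y)$ of complex dimension $l-k/2$, not $k/2-d$). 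The even-degree parity of $H^*(\pi^{-1}(y))$ cannot repair this discrepancy; the cellwise Thom classes simply do not patch to a global Thom class along a singular fibre, and the long exact sequences you invoke do not relate $H^*(Z_i,Z_i\setminus K_i)$ to $H^*(Z_{i-1},Z_{i-1}\setminus K_{i-1})$ in the way you need.

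The paper bypasses this entirely by observing that your ``main obstacle'' is illusory: $\pi^{-1}(S_y)$ is in fact \emph{smooth}. This is a short lemma. Writing $N_y=yN^{-}y^{-1}$, the $N_y$-equivariance of $\pi$ gives a product decomposition $\pi^{-1}(N_y\cdot y)\cong\pi^{-1}(S_y)\times X_y$ of an open subset of the smooth variety $\Sigma_\bw$, so $\pi^{-1}(S_y)$ is smooth of complex dimension $l$. Lefschetz duality then yields
\[
H^{*}(\pi^{-1}(S_y),\pi^{-1}(S^\circ_y);\Z)\;\cong\;H^{BM}_{2l-*}(\pi^{-1}(y);\Z),
\]
and the affine paving of the fibre makes the right-hand side free. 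That smoothness lemma is the missing idea.
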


\begin{proof}  Using the last remark and the
properness of $\pi$, we see that the target of $\phi_{y,\bw,\Z}$ is
isomorphic to the cohomology of the fiber $\pi^{-1}(y)$.  Since this fiber has
a paving by affines, its cohomology is free and vanishes in odd degrees.  The  
freeness and parity vanishing for the source of  $\phi_{y,\bw,\Z}$ follows from 
the isomorphism
\[
H^k(\pi^{-1}(S_y), \pi^{-1}(S^\circ_y); \kk)  \cong \Hom_\kk(H_{2d-k}(\pi^{-1}(y);\kk),  \kk),
\]
where $d = \dim_\C \pi^{-1}(S_y) = l - \ell(y)$.
This in turn follows from the freeness of $H_{2d-k}(\pi^{-1}(y)); \kk)$, the universal coefficient
theorem, and Poincar\'e duality with supports for the smooth variety $\pi^{-1}(S_y)$ --- 
see \cite[Proposition 3.46]{Hatcher}, for example.  

The last part now follows by the universal coefficient theorem.  
\end{proof}

\begin{prop} \label{obstruction to splitting} If the decomposition theorem holds for $A_{\bw,\kk}$,
then the cokernel of $\phi_{y,\bw,\kk}$ is a free $\kk$-module.
\end{prop}
\begin{proof} 
For any $w\in W$, the map
\[\HH^\udot(i_y^!(\IC(\overline{X_w};\kk)|_{S_y})) \to \HH^\udot(i_y^*\IC(\overline{X_w};\kk)|_{S_y}))\]
is an isomorphism if $y=w$ and is zero if $y \ne w$, by the degree
vanishing for intersection cohomology. As noted above the map
$\HH^\udot(i_y^!(A_{\bw,\kk}|_{S_y})) \to \HH(i_y^*(A_{\bw,\kk}|_{S_y}))$ is
equal to $\phi_{y,\bw,\kk}$, so if the decomposition theorem holds,
this map is a direct sum of maps whose cokernels are free.
\end{proof}

\begin{remark}  Suppose that $\bw$ is a reduced word for $w \in W$, and 
that the resulting map $\pi\colon \Sigma_\bw \to \overline{X_w}$ is 
semi-small.  In this case, the map $\phi_{y,\bw,\kk}$ was previously
considered in \cite{dCM,JMW2}, in the guise of an intersection form 
on the top Borel-Moore homology of $\pi^{-1}(y)$ with $\kk$ coefficients.
Semi-smallness implies that $\phi_{y,\bw,\kk}$ vanishes except in one 
degree, namely $\dim_\C(\overline{X_w}\cap S_y) = \ell(w) - \ell(y)$, 
and if the coefficients $\kk$ are a field or a complete local principal ideal
domain, \cite[Theorem 3.3.3]{dCM} or \cite[Theorem 3.5]{JMW2} shows that $\phi_{y,\bw,\kk}$ 
is an isomorphism for all 
$y$ if and only if the decomposition theorem holds with coefficients in $\kk$.
(Note that the result in \cite{dCM} is stated for $\Q$ coefficients only, but
in fact the arguments work more generally.)

In fact, a straightforward generalization of the argument given in \cite[Section 3]{JMW2}
shows that, even when $\pi$ is not semi-small, the graded 
multiplicity with which the parity sheaf $\sE(y,\kk)$ occurs as a direct summand of 
$A_{\bw, \kk}$ is given by the graded rank of $\phi_{y,\bw,\kk}$.  Hence, if
$\kk$ is a field of characteristic $p$, then the multiplicity with which 
$\sE(y,\kk)$ occurs in $A_{\bw,\kk}$ will be the same as in characteristic
$0$ if and only if $\coker \phi_{y,\bw,\kk}$ has no $p$-torsion.
\end{remark}

It follows from Proposition \ref{obstruction to splitting} that if $\kk$ is a 
field of characteristic $p$ and the cokernel of $\phi_{y,\bw,\Z}$ has $p$-torsion, the decomposition
theorem will fail for $A_{\bw,\kk}$, and so we must have
$\ch \sE(v, \kk) \ne \ch \sE(v, \Q) = \h_v$ for some $v \in W$.
Then Proposition \ref{prop:IC} and Corollary 
\ref{cor:Ztorsion}  imply that the parity sheaf $\sE(v,\kk)$ is not isomorphic to
$\IC_B(\overline{X_v}; \kk)$ and either the 
stalks or costalks of $\IC(\overline{X_v}; \Z)$ will have $p$-torsion.

In general it is difficult to determine for which $v$ this will happen.  It is not
hard to see, however, that at least one such $v$ must lie in the interval
$[y, w]$, where $w$ is the unique maximum element in the set $\{\bw^\vep \mid \vep \in D_l\}$.
The upper bound comes because $\pi(\Sigma_\bw)= \overline{X_w}$, so the sheaf $A_{\bw,\kk}$
is supported on $\overline{X_w}$.  For the lower bound, notice that the argument of
Proposition \ref{obstruction to splitting} still applies if we restrict to the open
set $U_y := \bigcup_{z \ge y} X_y$, so we can conclude that the decomposition theorem
fails for $A_{\bw,\kk}|_{U_y}$.

The following result gives one case where it is possible to be more precise about
the relation between $\phi_{y,\bw,\kk}$ and the stalks and costalks of 
$\IC(\overline{X_w},\kk)$ for a particular $w$.

\begin{prop} \label{isolated} Let $\bw$ be a reduced word for $w \in W$, take $y \le w$, and let
 $V = \overline{X_w} \cap S_y$ and $V^\circ = \overline{X_w} \cap S^\circ_y$.
Suppose that the map $\pi^{-1}(V^\circ) \to V^\circ$ is small.  
Let $\IC = \IC(V; \kk) \cong \IC(\overline{X_w};\kk)|_{V}$, shifted so that
$\IC|_{X_w \cap S_y}$ is a constant local system in degree $0$.  
If $d = \dim_\C V$, 
then the stalks and costalks of $\IC$ at $y$ are given by
\[  \HH^r(i^*_y \IC) = \begin{cases}
\coker \phi^r & \mbox{if }\; r \in 2\Z \;\mbox{ and }\; r < d \\
\ker \phi^{r+1} & \mbox{if }\; r + 1 \in 2\Z \;\mbox{ and }\; r < d\\
0 & \mbox{otherwise}
\end{cases}
\] 
and 
\[  \HH^r(i^!_y \IC) = \begin{cases}
\ker \phi^r & \mbox{if }\; r \in 2\Z \;\mbox{ and }\; r > d \\
\coker \phi^{r-1} & \mbox{if }\; r-1 \in 2\Z \;\mbox{ and }\; r > d \\
0 & \mbox{otherwise}
\end{cases}
\] 
where $\phi = \phi_{y,\bw,\kk}$ and $\phi^r$ is the degree $r$ part of $\phi$.
\end{prop}
\begin{proof}

Put $i = i_y$ and let $j \colon V^\circ \to V$ be the inclusion.
Let $A = A_{\bw, \kk}|_{V}$; it is isomorphic to the pushforward of 
$\underline{\kk}_{\pi^{-1}(V)}$ to $V$. 
Since the map $\pi^{-1}(V^\circ) \to V^\circ$ is
small, we have $j^*A \cong \IC(V^\circ;\kk)$.

From the truncation triangle
\[\IC = \tau_{\le d-1}j_*j^*A \to j_*j^*A \to \tau_{\ge d}j_* j^*A \stackrel{[1]}\longrightarrow\]
it follows that $\HH^r(i^*\IC) = \HH^r(i^*j_*j^*A)$ if $r < d$,
and $\HH^r(i^!\IC) = \HH^{r-1}(i^*j_*j^*A)$ if $r > d$, and they vanish otherwise.
(For the second statement, use the fact that $\tau_{\ge d}j_* j^*A$ is supported at $y$,
so $i^!\tau_{\ge d}j_* j^*A \cong i^*\tau_{\ge d}j_* j^*A$.)

As noted before Proposition \ref{univ coeff}, the map 
$\phi$ is the hypercohomology of the natural map $i^!A \to i^*A$, so
applying hypercohomology to the triangle
\[ i^! A \to i^*A \to i^*j_*j^*A \stackrel{[1]}\longrightarrow\]
and using Proposition \ref{univ coeff} (parity vanishing) proves the proposition.
\end{proof}

\begin{remark} \label{smooth Euler}
If the fiber $Y := \pi^{-1}(y)$ is smooth, then by passing to 
a tubular neighborhood and using excision we can replace the pair
$(\pi^{-1}(S_y), \pi^{-1}(S^\circ_y))$ by $(N, N^\circ)$, where
$N$ is the total space of the normal bundle $\cN$ to $Y$ in 
$\pi^{-1}(S_y)$, and $N^\circ$ is the complement of the the zero section in $N$.
In this case, by the Thom isomorphism theorem we have $H^\udot(N,N^\circ;\kk) \cong
H^{\udot - 2d}(Y; \kk)$, $d = \rank \cN$, and the map 
$\phi_{y,\bw,\kk}$ can be identified with 
multiplication by the Euler class $e(\cN)$.
\end{remark}

%\smallskip The spaces involved are all $T$-invariant,
%so we can also consider the corresponding map in equivariant cohomology:
%\[\phi^T_{y,\bw,\kk}\colon H^\udot_T(\pi^{-1}(S_y),\pi^{-1}(S^\circ_y);\kk) 
%\to H^\udot_T(\pi^{-1}(S_y);\kk).\] 
%The same arguments as the non-equivariant case, together with standard 
%arguments about equivariant formality and localization, prove the following.

%\begin{prop}
%The source and target of the equivariant homomorphism $\phi^T_{y,\bw,\kk}$ are
%free $H^\udot_T(pt;\kk)$-modules, and 
%$\phi_{y,\bw,\kk} = \phi^T_{y,\bw,\kk}\otimes_{H^\udot_T(pt;\kk)} \kk$.

%The restriction map $H^\udot_T(Y) \to H^\udot_T(Y^T)$ is an injection. 
%\end{prop}

\subsection*{The hexagon permutation}
Now fix $G = GL(8,\C)$, so $W$ is the symmetric group on the set
$\{1,\dots, 8\}$.  Taking the torus $T$ to be the diagonal matrices,
the lattice $X(T)$ of characters is naturally identified with 
$\Z^n$.  Let $\be_i$ be the $i$th standard basis vector of $\Z^n$.
The roots of $G$ are then the vectors $\rho_{ij} = \be_i-\be_j$, 
for $1\le i,j\le 8$, $i\ne j$.  

Choose the Borel subgroup $B$ to be the lower triangular matrices.
With this choice, the positive roots are $\rho_{ij}$ with $i < j$, and
the simple roots are $\rho_i \defeq \rho_{i,i+1}$, $i=1,\dots,7$.
The simple reflection $s_{\rho_i}$ corresponding to $\rho_i$
is the transposition of $i$ and $i+1$.  

% \smallskip
% In \cite{BW}, Billey and Warrington considered four elements of
% $W$ which they named ``hexagon permutations'' (the name refers
% to a characteristic hexagon shape appearing in their heap 
% representation).  In one-line notation (listing the elements
% $w(1),\dots,w(8)$ in order) they are $56781234$, $56718234$, 
% $46781235$, and $46718235$.  

% These significance of these permutations is explained by the following result.
% We say that a permutation $w\in S_n$ contains the pattern of a permutation
% $y\in S_m$ if there is a collection of indices $1 \le i_1 <\dots < i_m \le n$
% so that $w(i_1),\dots,w(i_m)$ are in the same relative order as 
% $y(1),\dots,y(m)$.  Otherwise, we say that $w$ avoids the pattern $y$. 
% For a more general notion of pattern avoidance which works for general
% Coxeter groups, see \cite{BiP}.  A geometric interpretation of pattern
% avoidance is given in \cite{BiBr}.  

% \begin{thm} Let $\bw$ be a reduced word for an element $w\in S_n$.
% Then $\pi\colon \Xt_\bw \to X_w$ is a small map if and only if
% the permutation $w$ avoids the pattern $321$ and the four hexagon 
% permutations. 
% \end{thm}

% The Bott-Samelson resolutions of the hexagon permutations are semi-small,
% but not small.

\smallskip
We now fix $w = w_1$ where $w_1$ is the shortest of the ``hexagon
permutations'' introduced in \ref{subsec:A7}. The one-line notation of
$w$ is $46718235$.
A reduced word $\bw$ for $w$ can be given by the sequence of simple reflections 
corresponding to the sequence of simple roots
\[\bal = (\alpha_1,\dots,\alpha_{14}) = (
\rho_3,\rho_2,\rho_1,\rho_5,\rho_4,\rho_3,\rho_2, \rho_6,\rho_5,\rho_4,
\rho_3,\rho_7,\rho_6,\rho_5).\]
Reduced words for the other three hexagon permutations are obtained
from this by appending $\rho_4$ at the beginning or the end, or both.

Let $y = s_{\rho^{}_2}s_{\rho^{}_3}s_{\rho^{}_2}s_{\rho^{}_5}s_{\rho^{}_6}s_{\rho^{}_5}$.
It is given in one-line notation as $14327658$.  

\begin{thm} \label{hexagon pairing}
The source and target of the map
\[\phi^8=\phi^8_{y,\bw,\Z}\colon H^8(\pi^{-1}(S_y), \pi^{-1}(S^\circ_y)) \to H^8(\pi^{-1}(S_y))\]
(the degree eight part of $\phi_{y,\bw,\Z}$) 
are both isomorphic to $\Z$; its cokernel is isomorphic to $\Z/2\Z$.
\end{thm}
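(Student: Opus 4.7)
The plan is to lift $\phi^8$ to $T$-equivariant cohomology, evaluate it there by localization at the $T$-fixed points of $\pi^{-1}(S_y)$, and then specialize via the identity $\phi_{y,\bw,\Z} = \phi^T_{y,\bw,\Z}\otimes_{H^\udot_T(pt;\Z)}\Z$ established in the preceding propositions. Since both source and target of $\phi^T_{y,\bw,\Z}$ are free over $H^\udot_T(pt;\Z)$, this specialization preserves the cokernel computation.

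First I would enumerate the $T$-fixed points lying in $\pi^{-1}(y)$: by Proposition~\ref{BS fixed points} these are in bijection with the $\vep\in D_{14}$ satisfying $\bw^\vep = y$, a finite combinatorial check using the reduced word $\bal$ displayed above. The lemma of the appendix gives $\dim_\C \pi^{-1}(S_y) = \ell(w) - \ell(y) = 14 - 6 = 8$, so $H^8$ is middle cohomology. Using the deformation retract of $\pi^{-1}(S_y)$ onto $\pi^{-1}(y)$ via the cocharacter $\zeta$, together with Lefschetz duality for the smooth manifold $\pi^{-1}(S_y)$, both the source and target of $\phi^8$ acquire a cellular description in terms of the Bialynicki-Birula paving of $\pi^{-1}(y)$. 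The rank-one statement then amounts to the combinatorial assertion, extracted from Theorem~\ref{BB cells}, that among the $\vep$ with $\bw^\vep = y$ exactly one gives a cell of complex dimension $4$ in $\pi^{-1}(y)$.

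Next I would compute $\phi^T$ itself by localization. At each fixed point $p(\vep)$ of $\pi^{-1}(y)$, the source class (the fundamental/Thom class of the $4$-dimensional cell) restricts to the product of those tangent weights of $T_{p(\vep)}\pi^{-1}(S_y)$ that are normal to $\pi^{-1}(y)$; on the target side, classes are constrained by GKM-type conditions on the $T$-curves through $p(\vep)$, whose weights are given by the formulae $\bw^{\vep[i-1]}(\alpha_i)$ recalled after Example~\ref{SL3 example}. Once the unique generator on each side is pinned down, the map $\phi^T$ is represented by an element of $H^\udot_T(pt;\Z)$; setting the equivariant parameters to zero yields the integer representing $\phi^8$ with respect to the chosen generators, which the theorem asserts is $\pm 2$.

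The hard part will be the combinatorial bookkeeping: enumerating all $\vep\in D_{14}$ with $\bw^\vep = y$ (several occur, corresponding to distinct subexpressions of $\bal$), cataloguing the $T$-curves through each such fixed point (including non-standard ones of the type foreshadowed in Example~\ref{SL3 example}, which are not of the $\delta_i$-form), and tracking which tangent weights at each fixed point are normal to $\pi^{-1}(y)$ within $\pi^{-1}(S_y)$. Once this data is tabulated, the whole calculation reduces to a concrete integer computation, and the crucial point is verifying that the resulting integer has $2$-adic valuation exactly one.
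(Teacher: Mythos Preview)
Your high-level strategy---lift to $T$-equivariant cohomology, compute by localization at fixed points, then specialize---is exactly the paper's. But the paper organizes the computation around three structural observations that you do not mention and that make the calculation tractable rather than a raw $29$-point GKM bookkeeping exercise.

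First, the paper determines the geometry of $Y=\pi^{-1}(y)$: it has $29$ fixed points and two irreducible components, $Y_1\cong Z\times Z$ (where $Z$ is the two-point blowup of $\PP^2$ from Example~\ref{SL3 example}) and $Y_2\cong(\PP^1)^3$; only $Y_1$ is four-dimensional. This immediately reduces $\phi^8$ to a map on the \emph{smooth} variety $Y_1$, identified with multiplication by the Euler class $e(\cN)$ of its normal bundle in $\pi^{-1}(S_y)$. Your phrase ``tangent weights normal to $\pi^{-1}(y)$'' is ill-posed at the four fixed points where $Y_1$ and $Y_2$ meet, since $Y$ is singular there; the paper's reduction to $Y_1$ is precisely what makes this well-defined.

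Second, the paper observes that $Y_1$ is the \emph{transverse} intersection of $\pi^{-1}(S_y)$ with the sub-Bott--Samelson $\Xt_{\bw'}$ obtained by deleting the four letters $\rho_1,\rho_4,\rho_4,\rho_7$ from $\bw$. Hence $\cN$ splits as a sum of four explicit $T$-line bundles $\cL_1,\dots,\cL_4$, and the localization of each $e_T(\cL_i)$ at the $25$ fixed points of $Y_1$ is a short table of roots.

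Third, rather than summing via Atiyah--Bott directly, the paper shifts each $e_T(\cL_i)$ by a constant weight (which does not affect the image in ordinary cohomology) so that the product vanishes at all but two fixed points, where it agrees with $e_T(\cT_{Y_1})$. The localization formula then gives $e(\cN)=2\cdot[\mathrm{pt}]$ with almost no arithmetic. Your plan would eventually reach the same integer, but without the product structure of $Y_1$ and the line-bundle splitting of $\cN$ the computation is substantially heavier.
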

If $2$ is not a unit in $\kk$, 
it follows from this and Proposition \ref{obstruction to splitting} that
the decomposition theorem fails with $\kk$ coefficients.  
Furthermore, one can check that 
the map $\pi\colon \Sigma_{\bw} \to \overline{X_w}$ is a small resolution 
over the open set $\bigcup_{x > y}X_x$, so Proposition \ref{isolated} implies 
that $\IC(\overline{X_w}; \kk)$ has $2$-torsion in its costalk at $y$
if $\kk = \Z$, and has nonvanishing stalks and costalks in odd degrees
if $\mathop{\mathrm{char}} \kk = 2$. 

To prove Theorem \ref{hexagon pairing}, we look more closely at the fiber $Y= \pi^{-1}(y)$.
We compute the map $\phi^8$ using $T$-equivariant
cohomology and localization.  First we describe the $T$-fixed 
points in $Y$.

Recall that we denote the standard basis of $D_l$ by
$\{\delta_i\}$.  Define elements of $D_{14}$ by
\[\begin{matrix}
    \lambda_1 = \delta_1 + \delta_2 +\delta_6 &
  \mu_1 = \delta_4 + \delta_8 + \delta_9 \\
    \lambda_2 = \delta_2 + \delta_6 + \delta_7 &
  \mu_2 = \delta_8 + \delta_9 + \delta_{13} \\
    \lambda_3 = \delta_6 + \delta_7 + \delta_{11} &
  \mu_3 = \delta_9 + \delta_{13} + \delta_{14} \\
    \lambda_4 = \delta_7 + \delta_{11} + \delta_1 &
  \mu_4 = \delta_{13} + \delta_{14} + \delta_4 \\
    \lambda_5 = \delta_{11} + \delta_1 + \delta_2 &
  \mu_5 = \delta_{14} + \delta_4 + \delta_8
\end{matrix}\]
and 
\[\nu = \delta_5 + \delta_{10}.\]

\begin{prop}\label{Y components} There are $29$ $T$-fixed points in $Y = \pi^{-1}(y)$.
They are given by
\begin{itemize}
\item[(a)] $p(\lambda_i + \mu_j)$, $1 \le i,j \le 5$, and
\item[(b)] $p(\lambda_i + \mu_j + \nu)$, $i, j \in \{4,5\}$.  
\end{itemize}
$Y$ has two irreducible components.  The
first, call it $Y_1$, is isomorphic to $Z \times Z$, where
$Z$ is isomorphic to $\PP^2$ blown
up at two points.  The $T$-fixed points in $Y_1$ are the ones given by 
(a) above.

The other component $Y_2$ is isomorphic to 
$\PP^1 \times \PP^1 \times \PP^1$. Its $T$-fixed points are
the four of type (b) and the four of type (a) where $i,j \in \{4,5\}$.
\end{prop}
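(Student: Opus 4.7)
The strategy is first to enumerate the $T$-fixed points of $Y$ combinatorially via Proposition \ref{BS fixed points}, and then to identify the two irreducible components geometrically using local models on $\Xt_\bw$. By Proposition \ref{BS fixed points}, $T$-fixed points of $Y = \pi^{-1}(y)$ correspond bijectively to $\vep \in D_{14}$ satisfying $\bw^\vep = y$. Since $y$ does not involve $s_{\rho_1}$ or $s_{\rho_7}$, which occur in $\bw$ only at positions $3$ and $12$ respectively, any valid $\vep$ has $\vep(3) = \vep(12) = 0$. The simple reflection $s_{\rho_4}$ appears only at positions $5$ and $10$, and a direct braid-word analysis shows that $\bw^\vep$ can reduce to $y$ only when $\vep(5) = \vep(10)$. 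This splits the enumeration into a ``type (a)'' case $\vep(5) = \vep(10) = 0$ and a ``type (b)'' case $\vep(5) = \vep(10) = 1$.

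In the type (a) case, the remaining positions $\{1,2,6,7,11\}$ and $\{4,8,9,13,14\}$ spell out words in the two commuting $A_2$-subsystems $\langle s_{\rho_2}, s_{\rho_3}\rangle$ and $\langle s_{\rho_5}, s_{\rho_6}\rangle$, namely $s_{\rho_3}s_{\rho_2}s_{\rho_3}s_{\rho_2}s_{\rho_3}$ and $s_{\rho_5}s_{\rho_6}s_{\rho_5}s_{\rho_6}s_{\rho_5}$ respectively. Each matches the $SL(3)$ word of Example \ref{SL3 example}, so the corresponding open locus in $Y$ is isomorphic to $Z \times Z$ with $Z$ equal to $\PP^2$ blown up at two points, and the five reduced subexpressions for the longest element of each $A_2$-subsystem in each factor are exactly $\lambda_1,\dots,\lambda_5$ and $\mu_1,\dots,\mu_5$. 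Taking closure yields the component $Y_1$, with its $25$ type (a) fixed points $p(\lambda_i+\mu_j)$.

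In the type (b) case, I would show by explicit word reduction that the two occurrences of $s_{\rho_4}$ cancel only when the adjacent letters enable the braid moves $s_{\rho_3}s_{\rho_4}s_{\rho_3} = s_{\rho_4}s_{\rho_3}s_{\rho_4}$ and $s_{\rho_4}s_{\rho_5}s_{\rho_4} = s_{\rho_5}s_{\rho_4}s_{\rho_5}$. This forces the ``left'' $\{s_{\rho_2},s_{\rho_3}\}$-subword to use position $1$ and terminate at position $7$ or $11$, and the ``right'' $\{s_{\rho_5},s_{\rho_6}\}$-subword to begin at position $4$ or $14$, so only $i,j \in \{4,5\}$ survive. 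This gives the four type (b) points $p(\lambda_i+\mu_j+\nu)$; together with the four type (a) points $p(\lambda_i+\mu_j)$ for $i,j \in \{4,5\}$ (limit points where the two $s_{\rho_4}$'s decouple) one finds the eight fixed points of a second component $Y_2$. To identify $Y_2 \cong \PP^1 \times \PP^1 \times \PP^1$, I would work in affine coordinates on $\Xt_\bw$ near a type (b) fixed point, exploit the Bia{\l}ynicki-Birula paving of Theorem \ref{BB cells} together with the smoothness of $\pi^{-1}(S_y)$ from the preceding lemma, and exhibit three independent rational curves corresponding to the three braid moves required to rewrite the product as $y$. Matching the $T$-tangent weights to those read off from the $T$-curves described after Proposition \ref{BS fixed points} then pins down the global product structure.

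The main obstacle is the analysis of $Y_2$: establishing that it is a genuinely separate irreducible component, not contained in $Y_1$, and verifying the product structure $(\PP^1)^3$. The key ingredients are a careful affine-chart description of $\Xt_\bw$ near the type (b) fixed points, independence of the three braid directions, and a $T$-weight comparison that rules out any further components or degenerations.
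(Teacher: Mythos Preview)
Your enumeration of the fixed points and the identification of $Y_1$ via the two commuting $A_2$-factors is essentially the paper's argument: one embeds the sub-Bott-Samelson $\Xt_{\bw'}$ for the subword omitting $\rho_1,\rho_4,\rho_7$ and observes that $Y\cap\Xt_{\bw'}$ is the product of two copies of the $SL(3)$ fiber from Example~\ref{SL3 example}. One small correction: this intersection is \emph{closed} in $Y$, not open, so there is no closure to take; $\Xt_{\bw'}$ is a closed subvariety of $\Xt_\bw$.

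The genuine gap is in establishing the component structure. You do not explain why $Y_1$ is a full irreducible component (rather than a proper subvariety of some larger component), nor why there are no components beyond $Y_1$ and $Y_2$. The paper handles this cleanly by invoking Theorem~\ref{BB cells}: the Bia\l ynicki-Birula paving of $Y$ has a unique four-dimensional cell $\Sigma_{\bw,\lambda_1+\mu_1}\cap Y$, so its closure is the only four-dimensional component and must equal $Y_1$; and $Y\setminus Y_1$ has a unique three-dimensional cell $\Sigma_{\bw,\lambda_4+\mu_4+\nu}\cap Y$, pinning down $Y_2$. Your proposal to ``rule out further components via $T$-weight comparison'' is vague and would be considerably harder to make rigorous than this dimension count.

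For $Y_2$ itself, the paper's approach is also more direct than yours. Rather than working in affine charts and reconstructing a product from tangent weights, the paper takes the subword $\bw''$ containing the supports of $\lambda_4,\lambda_5,\mu_4,\mu_5,\nu$, sets $Y_2 = Y\cap\Xt_{\bw''}$, and observes that the three partial projections $\pi_2,\pi_7,\pi_{12}$ each send $Y_2$ onto a $T$-curve in $X$; the product map $\pi_2\times\pi_7\times\pi_{12}$ then gives the isomorphism with $\PP^1\times\PP^1\times\PP^1$ directly. Your affine-chart strategy could in principle be made to work, but it is substantially more laborious and you have not indicated how to globalize the local product structure.
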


\begin{remark}  We do not need the full statement of the proposition to prove Theorem \ref{hexagon pairing};
we only need the description of the fixed points and the component $Y_1$ and the fact that all other
components have smaller dimension.
\end{remark}

\begin{proof} The enumeration of the points of $Y^T$ is straightforward, using Proposition \ref{BS fixed points}.

Let $I = \{1, 2, 4, 6, 7, 8, 9, 11, 13, 14\}$.  Then $w_I$
is the (non-reduced) subword of $\bw$ corresponding to the sequence of simple roots
\[%\bal' = (\alpha_1,\dots,\alpha_{10}) = 
(\rho_3,\rho_2,\rho_5,\rho_3,\rho_2, \rho_6,\rho_5,
\rho_3,\rho_6,\rho_5),\] 
where the roots $\rho_1$, $\rho_4$ and $\rho_7$
have been omitted from $\bw$.  As before we identify $\Xt_{\bw_I}$ with a subvariety of $\Xt_\bw$.

The simple reflections $\rho_2, \rho_3, \rho_5,\rho_6$ that appear 
in $\bw_I$ generate the Weyl group of the 
group $GL(3) \times GL(3)$, embedded into $GL(8)$ as block diagonal matrices 
acting on the middle two factors in the decomposition $\C^8 = \C \oplus \C^3 \oplus \C^3 \oplus
\C$.  It follows that there is an isomorphism $\Xt_{\bw_I} \cong \Xt_1 \times \Xt_2$, where
the factors are the Bott-Samelson varieties for $(\rho_3,\rho_2,\rho_3,\rho_2,\rho_3)$
and $(\rho_5,\rho_6,\rho_5,\rho_6,\rho_5)$, respectively.  Both $\Xt_1$ and $\Xt_2$ are
isomorphic to the Bott-Samelson variety in Example \ref{SL3 example}, and it is easy to
see that $Y_1 := Y\cap \Xt_{\bw_I}$ is a product of two copies of the fiber from that example.

To see that $Y_1$ is an irreducible component of $Y$, note that 
a computation with Theorem \ref{BB cells} shows that the paving by affines 
of $Y$ given by intersecting with the Bia\l ynicki-Birula cells has only
one cell of dimension four, namely $Y \cap \Sigma_{\bw,\lambda_1+\mu_1}$, and
all other cells are of smaller dimension.  So the closure of this
cell must be a component of $Y$, and it is the only four-dimensional component, so
it is equal to $Y_1$.  

To understand the other component, note that there is only one cell in
$Y \setminus Y_1$ of dimension three, namely $\Sigma_{\bw,\lambda_5 +\mu_5 + \nu} \cap Y$, and all other cells are of 
smaller dimension.  Consider the subword $\bw_J$, where
\[J = \{1, 2, 4, 5, 7, 8, 10, 11, 13, 14\}.\]
It corresponds to the sequence of simple roots
\[(\rho_3, \rho_2, \rho_5, \rho_4, \rho_2, \rho_6, \rho_4, \rho_3, \rho_6, \rho_5).\] 
This is the smallest subword of $\bw$ 
containing all the nonzero entries of $\lambda_4, \lambda_5, \mu_4, \mu_5$, and $\nu$.  
Set $Y_2 = Y \cap \Xt_{\bw_J}$.    The $T$-fixed
points of $Y_2$ are the ones of type (b) and the four of
type (a) with $i, j \in \{4, 5\}$.

To see that $Y_2$ is isomorphic to $(\PP^1)^3$, we use the following facts, which are 
easily checked:
\begin{itemize} 
\item If words $\bw_1$ and $\bw_2$ differ by interchanging adjacent 
transpositions $s_{\rho_i}$ and $s_{\rho_j}$ with $|i - j| > 1$, then the 
Bott-Samelson varieties $\Sigma_{\bw_1}$ and $\Sigma_{\bw_2}$ are isomorphic
by a $T$-equivariant map which commutes with the projections $\pi_1, \pi_2$ to $X$.
\item If the word $\bw_1$ is obtained from $\bw_2$ by doubling 
$k$ of the simple reflections (i.e.\ replacing $s_{\rho_i}$ with $s_{\rho_i} s_{\rho_i}$), 
then $\Xt_{\bw_1}$ is a fiber bundle over $\Xt_{\bw_2}$ with fiber $(\PP^1)^k$, so that the composition
$\Xt_{\bw_1} \to \Xt_{\bw_2} \stackrel{\pi_1}\longrightarrow X$ gives the map $\pi_2$.
\end{itemize}
Using these, if we let $\bw_1$ and $\bw_2$ correspond to the sequences
\[(\rho_3, \rho_2,\rho_2, \rho_5, \rho_4, \rho_4, \rho_3, \rho_6, \rho_6, \rho_5)\;\text{and}\;
 (\rho_3, \rho_2, \rho_5, \rho_4, \rho_3, \rho_6, \rho_5)\]
 of simple roots,
 then we see that $\Xt_{\bw_J} \cong \Xt_{\bw_1}$ is a fiber bundle with 
 fiber $\PP^3$ over $\Xt_{\bw_2}$.  It is easy to see that the fiber 
 of $\Xt_{\bw_2} \to X$ over $y$ is a single point, so $Y_2$, which is the 
 fiber of $\Xt_{\bw_J} \to X$ over $y$, is isomorphic to $(\PP^1)^3$.

Using Corollary 
\ref{Containment of cells} it is easy to check that all the cells $Y \cap \Sigma_{\bw, \vep}$ which are
not contained in $Y_1$ are contained in $Y_2 := Y \cap \Xt_{\bw_J}$.
\end{proof}

The first part of Theorem \ref{hexagon pairing} follows immediately.  Although the fiber 
$Y$ is not smooth, there is only one component of dimension four, so
the target of $\phi^8$ is \[H^8(\pi^{-1}(S_y)) \cong H^8(Y) \cong H^8(Y_1) \cong \Z.\]
Dually, we have isomorphisms 

\begin{align*} H^8(\pi^{-1}(S_y), \pi^{-1}(S^\circ_y)) & \cong H^8(\pi^{-1}(S_y), \pi^{-1}(S_y) \setminus Y) \\
& \cong H^8(\pi^{-1}( S_y), \pi^{-1}(S_y) \setminus Y_1) \cong \Z.
\end{align*}

Thus we can reduce the computation of $\phi^8$ to the smooth case: it is isomorphic to 
the restriction map $H^8(N, N\setminus Y_1)\to H^8(N) \cong H^8(Y_1)$, where $N$ is
the total space of the normal bundle $\cN$ to $Y_1$ in $\pi^{-1}(S_y)$.  As remarked earlier,
this can be identified with 
the map $H^0(Y_1) \to H^8(Y_1)$ given by multiplication by the Euler class $e(\cN)$,
so the image of $\phi^8$ is spanned by $e(\cN)$. 

We will compute this class by computing the equivariant Euler class $e_T(\cN)\in H^8_T(Y_1)$ 
and then finding its image in ordinary cohomology.
To do this, we split the normal bundle $\cN$ into line bundles.  
We have seen in the
proof of Proposition \ref{Y components} that $Y_1 = \Xt_{\bw_I} \cap \pi^{-1}(S_y)$.
By Lemma \ref{transverse} this intersection is transverse, and so $\cN$ is 
isomorphic to the restriction to $Y_1$ of the 
normal bundle to $\Xt_{\bw_I}$ in $\Xt_\bw$.

Let $I_1 = I \cup \{3\}$, $I_2 = I \cup \{5\}$, $I_3 = I \cup \{10\}$, $I_4 = I \cup \{12\}$,
so $\bw_1,\dots \bw_4 := \bw_{I_1}, \dots, \bw_{I_4}$ are all the 
subwords of $\bw$ of length $11$ which contain $\bw_I$ as a subword.  
It is easy to see that the subvarieties $\Xt_{\bw_i}$ intersect transversely 
in $\Xt_\bw$, so letting $\cL_i$ be 
the restriction to $Y_1$ of the normal bundle to $\Xt_{\bw_I}$ in $\Xt_{\bw_{i}}$, 
we have a splitting $\cN \cong \cL_1 \oplus \cL_2 \oplus \cL_3 \oplus \cL_4$. 

To compute the classes $e_T(\cL_i)$, we compute their restrictions
to the fixed point set $Y_1^T = \{p(\lambda_j + \mu_k) \mid 1 \le j, k \le 5\}$.
The restriction $e_T(\cL_i)|_{p(\lambda_j + \mu_k)} \in H_T^2(p(\lambda_j + \mu_k)) \cong X(T)$
is just the $T$-weight of the tangent space to the unique $T$-curve containing
$p(\lambda_j + \mu_k)$, contained in $\Xt_{\bw_i}$ and not contained in $\Xt_{\bw}$.
This curve is the curve joining $p(\lambda_j + \mu_k)$ and $p(\eta_i+ \lambda_j + \mu_k)$,
where $\eta_1 = \delta_3$, $\eta_2 = \delta_5$, $\eta_3 = \delta_{10}$, $\eta_4 = \delta_{12}$.
Using Propositions \ref{flag 1d orbits} and \ref{BS fixed points}, we can compute that 
its $T$-weight is the sum of the entries under $\lambda_j$ and $\mu_k$ in the following tables:

\[\begin{array}{|c|c|c|c|c|c|}
\hline  & \lambda_1 & \lambda_2 & \lambda_3 & \lambda_4 & \lambda_5  \\ 
\hline\hline e_T(\cL_1) & \rho_1+\rho_2+\rho_3 & \rho_1+\rho_2 & \rho_1 & \rho_1 & \rho_1+\rho_2+\rho_3  \\ 
\hline e_T(\cL_2) & \rho_3+\rho_4 & \rho_4 & \rho_4 & \rho_3+\rho_4 & \rho_3+\rho_4  \\ 
\hline e_T(\cL_3) & \rho_2+\rho_3+\rho_4  & \rho_2+\rho_3+\rho_4 & \rho_3+\rho_4 &  \rho_3+\rho_4 & \rho_3+\rho_4 \\
\hline e_T(\cL_4) & 0 & 0 & 0 & 0 & 0 \\ 
\hline 
\end{array} 
\]

\[\begin{array}{|c|c|c|c|c|c|}
\hline  & \mu_1 & \mu_2 & \;\;\;\mu_3\;\;\; & \;\;\;\mu_4\;\;\; & \mu_5 \\ 
\hline\hline e_T(\cL_1)  & 0 & 0 & 0 & 0 & 0 \\ 
\hline e_T(\cL_2) & \rho_5 & 0 & 0 & \rho_5 & \rho_5 \\ 
\hline e_T(\cL_3) & \rho_5+\rho_6 & \rho_5+\rho_6 & \rho_5 & \rho_5 & \rho_5  \\
\hline e_T(\cL_4) & \rho_5+\rho_6+\rho_7 & \rho_6+\rho_7 & \rho_7 & \rho_7  & \rho_5+\rho_6+\rho_7 \\ 
\hline 
\end{array}
\]

\medskip

The equivariant class $e_T(\cN) = e_T(\cL_1)e_T(\cL_2)e_T(\cL_3)e_T(\cL_4)$ induces the same class in 
$H^8(Y_1)$ as
\[(e_T(\cL_1) - (\rho_1+\rho_2))(e_T(\cL_2) - (\rho_3 + \rho_4 + \rho_5))(e_T(\cL_3) - (\rho_3 + \rho_4 + \rho_5))(e_T(\cL_4) - (\rho_6 + \rho_7)),\]
where we abuse notation and write a weight $\rho_i \in X(T) = H^2_T(pt)$ instead of its pullback 
under the map $Y_1 \to pt$.  After a little computation one sees that this class restricts to zero at every point 
of $(Y_1)^T$ except $p(\lambda_3+\mu_1)$ and $p(\lambda_1+\mu_3)$, where it has the same restriction as
$e_T(\cT_{Y_1})$, the equivariant Euler class of the tangent bundle to $Y_1$.  (To compute the localization
of $e_T(\cT_{Y_1})$ to the fixed points, use the identification of the weights of $T$-curves in
Example \ref{SL3 example}.  Note that the labeling of the fixed points $\vep_1, \dots, \vep_5$ in that example
corresponds to the labeling of the fixed points $\lambda_1,\dots,\lambda_5$ and $\mu_1,\dots,\mu_5$
of $\Xt_1$ and $\Xt_2$.)  Then the Atiyah-Bott-Berline-Vergne localization formula \cite{AtBo, BeVe} 
implies that $e(\cN)$ is twice a generator of $H^8(Y_1)$, completing the proof of 
Theorem \ref{hexagon pairing}.

\begin{remark} \label{rem:otherhex}
The other hexagon permutations can be shown to have $2$-torsion by a similar computation;
we give only the main points.  Let $\tilde\bw = (s_4)^a \bw (s_4)^b$ and
$\tilde y = (s_4)^a y (s_4)^b$ for $a, b \in \{0,1\}$.  
The fiber $\tilde Y = \pi^{-1}(\tilde y)$ is still four-dimensional, 
but now the union of the components of maximal dimension is isomorphic to $Z_a \times Z_b$, 
where $Z_0 = Z$ and $Z_1 = Z \cup (\PP^1 \times \PP^1)$, the union taken
so that $\{0\} \times \PP^1$ is identified with a $T$-curve in 
$Z$ with trivial normal bundle.  The excess intersection formula \cite{FulInt}
then implies that the matrix of $\phi^8$ is diagonal under the
natural bases given by the components of $\tilde Y$ (in other words,
the components are orthogonal under the intersection form).
The normal bundle to the component $Z \times Z$ is the same as
before, so we have $\det \phi^8 \in 2\Z$.  
\end{remark}

\subsection*{Torsion example in D4}
Let $G = SO(8;\C)$.  We follow the notation of Section \ref{subsec:D4}: the simple reflections in $W$ are $s, t, u, v$ where
$s, u, v$ all commute with each other.  Let $\bw$ be the word $(s,u,v,t,s, u, v)$, put $w = \pi(\bw)$, 
and let $y = suv$.  

\begin{prop} The $T$-fixed points in $Y := \pi^{-1}(y)$ are  
\[\{p(\vep) \in D_7 \mid \vep(4) = 0\;\text{and}\; \vep(i) + \vep(i+4) = 1\;\text{for}\; i = 1,\dots, 3\}.\]
The fiber $Y$ is the transverse intersection of $\Xt_{(s,u,v,s,u,v)} \subset \Xt_\bw$ and $\pi^{-1}(S_y)$.
It is $T$-equivariantly isomorphic to $\PP^1 \times \PP^1 \times \PP^1$, where the $T$-weights
on the three factors are $\rho_s$, $\rho_u$, and $\rho_v$, respectively.  
\end{prop}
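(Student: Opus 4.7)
The plan is to verify, in order, the enumeration of $Y^T$, the realization of $Y$ as a transverse intersection, and the $T$-equivariant identification with $\PP^1 \times \PP^1 \times \PP^1$.

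For the $T$-fixed points, I apply Proposition \ref{BS fixed points}, according to which $p(\vep) \in Y$ if and only if $\bw^\vep = y = suv$. If $\vep(4) = 1$ then $\bw^\vep = atb$ with $a, b \in \langle s, u, v \rangle$, which cannot equal $y$ because the parabolic subgroup $\langle s,u,v \rangle \subset W$ does not contain $t$; so $\vep(4) = 0$. Using that $s, u, v$ commute pairwise, the product simplifies to $s^{\vep(1)+\vep(5)} u^{\vep(2)+\vep(6)} v^{\vep(3)+\vep(7)}$, which equals $suv$ precisely when $\vep(i) + \vep(i+4) \equiv 1 \pmod 2$ for $i = 1, 2, 3$, giving the eight listed fixed points.

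For the transverse intersection claim, let $P' = P_{\{s,u,v\}}$, so that $yB \in P'/B$. I first show $Y \subset \Xt_{(s,u,v,s,u,v)}$: if $p = [x_1, \ldots, x_7] \in Y$ had $x_4 \notin B$, the Bruhat decomposition $P_t = B \sqcup B\tilde t B$ would give $x_4 \in B\tilde t B$, whence $\pi(p) \in P' \tilde t P' \cdot B/B$. But this double coset is disjoint from $P'/B$ as $\tilde t \notin P'$, contradicting $\pi(p) = yB \in P'/B$. Since $\overline{X_y} \cap S_y = \{y\}$, this gives $Y = \Xt_{(s,u,v,s,u,v)} \cap \pi^{-1}(S_y)$ set-theoretically. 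Transversality then follows by the same argument used earlier in the hexagon case: the restriction of $\pi$ to $\Xt_{(s,u,v,s,u,v)}$ is $B$-equivariant, surjects onto $\overline{X_y}$, and is submersive over the open cell $X_y$, while $S_y$ is transverse to $X_y$ in $X$.

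For the identification with $\PP^1 \times \PP^1 \times \PP^1$, the key geometric input is that the simple roots $\rho_s, \rho_u, \rho_v$ are pairwise orthogonal (since the outer nodes of the $D_4$ Dynkin diagram are mutually non-adjacent), which implies $P_\sig \cap P_{\{\sig', \sig''\}} = B$ for any relabeling of $\{s, u, v\}$. This in turn gives $P' = P_s P_u P_v$ with the factorization unique modulo the Bott-Samelson $B^3$-action on $(x_1, x_2, x_3)$. Consequently the Demazure map $\pi_3 \colon \Xt_{(s,u,v)} \to P'/B$ is a proper bijective morphism between smooth projective varieties, hence an isomorphism by Zariski's Main Theorem; moreover $P'/B$ is the flag variety of the Levi of $P'$, which is a central extension of $(SL_2)^3$, so $P'/B \cong \PP^1 \times \PP^1 \times \PP^1$ with tangent weights $\rho_s, \rho_u, \rho_v$ at the identity coset. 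An identical factorization-uniqueness argument applied to the word $(s,u,v,s,u,v)$ shows that $(\pi_3, \pi_6) \colon \Xt_{(s,u,v,s,u,v)} \to (P'/B)^2$ is an isomorphism; under it the fiber $Y = \pi_6^{-1}(y)$ corresponds to $(P'/B) \times \{y\} \cong \PP^1 \times \PP^1 \times \PP^1$, yielding the stated $T$-equivariant identification. The main obstacle is the uniqueness of the factorization of elements of $P'$ and its sixfold analog, which underpin the two isomorphisms.
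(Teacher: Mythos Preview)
The paper states this proposition without proof, leaving the verification to the reader with the $A_7$ case (Proposition~\ref{Y components}) as a template. Your argument is correct and follows that template closely for the fixed-point enumeration and for transversality: the $B$-equivariant submersion argument you invoke is verbatim the one used just after Proposition~\ref{Y components} to establish transversality of $\Xt_{\bw'}$ and $\pi^{-1}(S_y)$ in the hexagon case.

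For the $(\PP^1)^3$ identification your route differs slightly from the $A_7$ analog. There, the component $Y_2$ is identified with $(\PP^1)^3$ by checking that three separate maps $\pi_2,\pi_7,\pi_{12}$ each send the eight fixed points of $Y_2$ onto a pair of fixed points in $X$, hence onto a $T$-curve, and then taking the product map. You instead exhibit an isomorphism $\Xt_{(s,u,v,s,u,v)}\cong (P'/B)^2$ via $(\pi_3,\pi_6)$, using the unique-factorization property $P'=P_sP_uP_v$ that follows from $P_\sigma\cap P_{\{\sigma',\sigma''\}}=B$, and then read off the fiber over $y$ in the second factor. This is a somewhat more structural route: it makes the smoothness of $Y$ and the Levi description of $P'/B$ (hence the $T$-weights $\rho_s,\rho_u,\rho_v$) transparent without needing to inspect individual $T$-curves.
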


By Remark \ref{smooth Euler}, since $Y$ is smooth, we have $H^\udot(\pi^{-1}(S_y),\pi^{-1}(S^\circ_y);\kk) \cong H^{\udot-2}(Y)$, and 
by Remark \ref{smooth Euler} the map $\phi = \phi_{y,\bw,\Z}$ can be identified with multiplication by 
$e(\cL)$ on $H^\udot(Y)$, where $\cL$ is the normal bundle to $Y$ in $\pi^{-1}(S_y)$.  
As in the previous example we compute this by computing the localization of the equivariant
class $e_T(\cL)$ to the fixed points $Y^T$.  We have
\[e_T(\cL)|_{p(\vep)} = \rho_t + \vep(1)\rho_s + \vep(2)\rho_u + \vep(3)\rho_v,\]
so $e(\cL) = \alpha + \beta+\gamma$, where $\alpha,\beta,\gamma \in H^2(Y)$ are the
pullbacks of a generating class of $H^2(\PP^1)$ by the three projection maps.
Multiplication by this class from $H^2(Y)$ to $H^4(Y)$ is given by the matrix
\[\left[\begin{matrix} 1 & 1& 0 \\
1 & 0 & 1\\
0 & 1 & 1\\
\end{matrix}\right]
\]  
with respect to the natural monomial basis in $\alpha$, $\beta$, $\gamma$.  This 
matrix has determinant $-2$, so $\coker \phi^4$ has $2$-torsion.  

Just as we
saw for the hexagon permutation, if $2$ is not a unit in $\kk$
Proposition \ref{obstruction to splitting} implies that the 
decomposition theorem with $\kk$ coefficients fails 
and Proposition \ref{isolated} implies that the costalk 
of $\IC(\overline{X_w}; \kk)$ at $y$ does not vanish in odd degrees.  
Furthermore, if $\mathop{\mathrm{char}} \kk = 2$, the stalk of
$\IC(\overline{X_w}; \kk)$ also has nonvanishing odd-degree part.

\def\cprime{$'$} \def\cprime{$'$}

%\bibliographystyle{myalpha}
%\bibliography{gen}

\end{document}